\definecolor{darkgreen}{rgb}{0.06, 0.56, 0.2}
\newcommand{\R}{\mathbb{R}}
\DeclareMathOperator{\diag}{diag}
\newcommand{\sr}[1]{{\cal #1}}
\newcommand{\dd}[1]{\mathbb{#1}}
\newcommand{\br}[1]{\langle #1 \rangle}
\newcommand{\os}{\overset}
\newcommand{\eq}[1]{(\ref{eq:#1})}
\newcommand{\lem}[1]{Lemma~\ref{lem:#1}}
\newcommand{\thm}[1]{Theorem~\ref{thm:#1}}
\newcommand{\pro}[1]{Proposition~\ref{pro:#1}}
\newcommand{\exa}[1]{Example~\ref{exa:#1}}
\newcommand{\dfn}[1]{Definition~\ref{dfn:#1}}
\newcommand{\app}[1]{Appendix~\ref{app:#1}}
\newcommand{\sectn}[1]{Section~\ref{sec:#1}}
\newcommand{\thmt}[1]{\ref{thm:#1}}
\newcommand{\sect}[1]{\ref{sec:#1}}
\newcommand{\pend}{\hfill \thicklines \framebox(6.6,6.6)[l]{}}
\newenvironment{proof*}[1]{\noindent {\sc  #1} \rm}{\pend}
\newtheorem{theorem}{Theorem}[section]
\newtheorem{lemma}{Lemma}[section]
\newtheorem{proposition}{Proposition}[section]
\newtheorem{remark}{Remark}[section]
\newtheorem{corollary}{Corollary}[section]
\theoremstyle{definition}
\newtheorem{example}{Example}[section]
\newtheorem{definition}{Definition}[section]
\newcommand{\setsection}[2] {
\setcounter{section}{#1}
\setcounter{subsection}{0}
\setcounter{equation}{0}
\setcounter{conjecture}{0}
\setcounter{assumption}{0}
\setcounter{question}{0}
\setcounter{definition}{0}
\setcounter{theorem}{0}
\setcounter{corollary}{0}
\setcounter{lemma}{0}
\setcounter{proposition}{0}
\setcounter{remark}{0}
\setcounter{appen}{0}
\setsection*{\large \bf \thesection. #2}}
\newenvironment{mylist}[1]{\begin{list}{}
{\setlength{\itemindent}{#1mm}}
{\setlength{\itemsep}{0ex plus 0.2ex}}
{\setlength{\parsep}{0.5ex plus 0.2ex}}
{\setlength{\labelwidth}{10mm}}
}{\end{list}}
\newcommand{\setnewcounter} {
\setcounter{subsection}{0}
\setcounter{equation}{0}
\setcounter{conjecture}{0}
\setcounter{assumption}{0}
\setcounter{question}{0}
\setcounter{definition}{0}
\setcounter{theorem}{0}
\setcounter{corollary}{0}
\setcounter{lemma}{0}
\setcounter{proposition}{0}
\setcounter{remark}{0}
}
\begin{document}

\title{\bf \Large  Tight matrices and heavy traffic steady state convergence in queueing networks}

%%%%%%%%%%%%%%
%   AUTHORS  %
%%%%%%%%%%%%%%
\author{J.G. Dai\\Cornell University\\ \and Yiquan Ji\\Tsinghua University \and Masakiyo Miyazawa\\Tokyo University of Science\\}
\date{June 29, 2025}
%\date{}

\maketitle

\begin{abstract}
We are interested to prove that the stationary distribution of a multiclass queueing network converges to the stationary distribution of a semimartingale reflecting Brownian motion (SRBM) in heavy traffic.  A key condition for this convergence is that the sequence of the pre-limit stationary distributions under appropriate scaling is tight. In \citet{BravDaiMiya2025}, a sufficient condition for this tightness is introduced in the term of the reflection matrix $R$ of the SRBM, which is coined for $R$ to be ``tight''. In this paper, we study how we can verify this tightness of $R$ of an SRBM. For a $2$-dimensional SRBM, we give necessary and sufficient conditions for $R$ to be tight, while, for a general dimension, we only give sufficient conditions. We then apply these results to the SRBMs arising from the diffusion approximations of  multiclass queueing networks with static buffer priority service disciplines that are studied in \cite{BravDaiMiya2025}. It is shown that $R$ is always tight for this network with two stations if $R$ is completely-$\sr{S}$. For the case of more than two stations, it is shown that $R$ is tight for reentrant lines with last-buffer-first-service (LBFS) discipline, but it is not always tight for reentrant line with  first-buffer-first-service (FBFS) discipline.
\end{abstract}

 \begin{quotation}
\noindent {\bf Keywords}: SRBM; stationary distribution; tight matrix; multiclass queueing networks; heavy traffic; diffusion approximation; BAR approach. 

\end{quotation}

\section{Introduction}
\label{sec:introduction}
\setnewcounter

We are interested to prove that the stationary distribution of a multiclass queueing network converges to the stationary distribution of a semimartingale reflecting Brownian motion (SRBM) in heavy traffic, or as the load at each service station
becomes ``critical.''  A key condition for this convergence is that the sequence of the pre-limit stationary distributions under appropriate scaling is tight. In \citet{BravDaiMiya2025}, a sufficient condition for this tightness is introduced in the term of the reflection matrix $R$ of the SRBM, which is coined for $R$ to be ``tight''. This tightness of $R$ is applicable to any queueing network for proving that the heavy traffic limit of its stationary distribution is identical with the stationary distribution of an SRBM. Here, it is not necessary to show for this SRBM to be the process limit of the queueing process of the network in heavy traffic. Instead of it, we need to show that the stationary equation, called the basic adjoint relationship (BAR), for the pre-limit stationary distribution converges to an equation similar to the BAR of the SRBM. This part has been studied in the literature (see \cite{BravDaiMiya2017,BravDaiMiya2025}), and we do not consider it here.

In this paper, we study how we can verify this tightness of $R$ for an SRBM. For a $2$-dimensional SRBM, we give necessary and sufficient conditions for $R$ to be tight, while, for a general dimension, we only give sufficient conditions. We then apply these results to the SRBMs arising from the diffusion approximations of  multiclass queueing networks with static buffer priority service disciplines that are studied in \cite{BravDaiMiya2025}. It is shown that $R$ is always tight for this network with two stations if $R$ is completely-$\sr{S}$. For the case of more than two stations, it is shown that $R$ is tight for reentrant lines with last-buffer-first-service (LBFS) discipline, but we construct an example of FBFS
reentrant queueing network whose reflection matrix is not tight.
This example leaves the problem open if the steady-state convergence
holds for FBFS reentrant queueing networks in heavy traffic. For literature review of Brownian approximations of queueing networks, see for example, the introduction of \citet{BravDaiMiya2025}.

This paper consists of five sections and an appendix. In \sectn{problem}, we discuss about what problem motivates this study. We then introduce the tight matrix  and main results
(Theorems \thmt{1} and \thmt{2}) in \sectn{main}. In \sectn{application}, we apply main results to SRBMs arising from  multiclass queueing networks with static buffer priority (SBP) service disciplines.
Main theorems are proved in \sectn{proof-theorem}. 

\section{Problem of our interest}
%\label{sec:tight}
\label{sec:problem}
\setnewcounter

Consider a multi-class queueing network with $J$ stations for service and $K$ classes of customers such that $0 < J \le K < \infty$,
%\red{(not consistent with section~\ref{sec:general}); $d$-stations are used there}
where customers who are served in different stations are classified into different classes. We are interested in the diffusion approximation of this queueing network in heavy traffic. This topic was introduced in \citet{Harr1988} and subsequently studied by many authors; see, for example, \cite{HarrNguy1993,Bram1998,Will1998,ChenZhan2000b,BramDai2001}. To study it, we consider the sequence of these networks indexed by $n=1,2,\ldots$. Let $L^{(n)}_{k}(t)$ be the number of class $k$ customers in the $n$-th network at time $t \ge 0$, which is referred to as the queue size of class $k$ customer at time $t$. For the diffusion approximation, define
\begin{align}
\label{eq:hQ-t}
   \widehat{L}^{(n)}_{k}(t) = n^{-1/2} L^{(n)}_{k}(nt), \qquad t \ge 0, k =1,2,\ldots,K,
\end{align}
and let $\widehat{L}^{(n)}(t) = (\widehat{L}^{(n)}_{1}(t),\widehat{L}^{(n)}_{2}(t),\ldots,\widehat{L}^{(n)}_{K}(t))$. Then, we refer to the $K$-dimensional process $\widehat{L}^{(n)}(\cdot) \equiv \{\widehat{L}^{(n)}(t); t \ge 0\}$ as a diffusion scaled queueing process.

We consider the situation that $\widehat{L}^{(n)}_{k}(t)$ vanishes in probability as $n \to \infty$ in heavy traffic for a subset of customer classes $k$. This phenomenon of vanishing is called a state space collapse. Assume state space collapse has been proved. We then focus on the limiting behaviors of the diffusion scaled queues in remaining classes, which do not  vanish in heavy traffic. Let $d$ be their total number. By  appropriately renumbering customer classes, we denote the indexes of those queues by $1,2,\ldots,d$. Define
\begin{align*}
  \widehat{L}^{(n)}_{[1,d]}(t) = (\widehat{L}^{(n)}_{1}(t), \widehat{L}^{(n)}_{2}(t), \ldots, \widehat{L}^{(n)}_{d}(t)),
\end{align*}
and let $\widehat{L}^{(n)}_{[1,d]}(\cdot) = \{\widehat{L}^{(n)}_{[1,d]}(t); t \ge 0\}$. Note that $J=d$ in our applications, which means that all the stations are in heavy traffic as $n$ gets large, but $d < J$ may occur in general. For convenience, let $\sr{N}_{d} = \{1,2,\ldots,d\}$.  We denote by $\dd{R}^{d}$ the $d$-dimensional real vector space, let $\dd{R}^{d}_{+} = \{x \in \dd{R}^{d}; x \ge 0\}$, and let $\dd{R}^{d}_{-} = \{x \in \dd{R}^{d}; x \le 0\}$, where all vector inequalities are interpreted componentwise.

Thus, we are interested in the limiting behavior of the sequence of $\widehat{L}^{(n)}_{[1,d]}(\cdot)$ for $n \ge 1$ in heavy traffic. There are two different modes for this limit. One is a process limit, which is the weak limit of process $\widehat{L}^{(n)}_{[1,d]}(\cdot)$ as $n \to \infty$. Another is the limit of the stationary distribution of $\widehat{L}^{(n)}_{[1,d]}(\cdot)$ under the assumption that $\widehat{L}^{(n)}_{[1,d]}(\cdot)$ is stable for each $n \ge 1$, where a stochastic process is called stable if it has a unique stationary distribution. We denote a random vector subject to this stationary distribution by
\begin{align*}
  \widehat{L}^{(n)}_{[1,d]} \equiv (\widehat{L}^{(n)}_{1}, \widehat{L}^{(n)}_{2}, \ldots, \widehat{L}^{(n)}_{d}).
\end{align*}

Both modes of the limit are important in application and have been independently studied in the literature because their derivations require different techniques. We are particularly interested in the distributional limit of $\widehat{L}^{(n)}_{[1,d]}$ as $n \to \infty$ under the stability condition. Denote the distribution of $\widehat{L}^{(n)}_{[1,d]}$ by $\widehat{\nu}^{(n)}$, then our interest is to find a unique probability distribution $\widehat{\nu}$ on $\dd{R}_{+}^{d}$ such that
\begin{align}
\label{eq:limit-hnu}
  \widehat{\nu}^{(n)} \os{d}{\Rightarrow} \widehat{\nu}, \qquad n \to \infty.
\end{align}
where ``$\os{d}{\Rightarrow}$'' stands for the weak convergence of probability distributions. To consider this problem, note that, by the vague sequential compactness of $\{\widehat{\nu}^{(n)}; n \ge 1\}$ (e.g., see Theorem 5.19 of \cite{Kall2001}), there is a subprobability measure $\nu$ on $\dd{R}_{+}^{d}$ and a subsequence $\{\widehat{\nu}^{(n_{\ell})}; \ell \ge 1\}$ for any subsequence of $\{\widehat{\nu}^{(n)}; n \ge 1\}$ such that
\begin{align}
\label{eq:limit-nu}
  \widehat{\nu}^{(n_{\ell})} \os{v}{\Rightarrow} \nu, \qquad \ell \to \infty.
\end{align}
where ``$\os{v}{\Rightarrow}$'' stands for the vague convergence of measure. Then, the problem is to show that $\nu$ is a probability measure and uniquely obtained independent of the subsequence $\{\widehat{\nu}^{(n_{\ell})}; \ell \ge 1\}$. Namely, \eq{limit-nu} implies \eq{limit-hnu} with $\widehat{\nu} = \nu$ if the following two conditions hold.
\begin{mylist}{3}
\item [(\sect{problem}.a)] $\{\widehat{\nu}^{(n)}; n \ge 1\}$ is tight. That is, there is a compact set $C \subset \dd{R}_{+}^{d}$ for any $\varepsilon > 0$ such that $\inf_{n \ge 1} \widehat{\nu}^{(n)}(C) \ge 1 - \varepsilon$.
\item [(\sect{problem}.b)] The $\nu$ of \eq{limit-nu} is unique and independent of the choice of the subsequences. 
\end{mylist}

Thus, we aim to find tractable conditions for these two conditions to hold. To discuss them as well as to present our results, we review the various
matrix classes that are  relevant to the Brownian approximations
of queueing networks.

A $d\times d$ square matrix $A$ is called a completely-$\sr{S}$-matrix
if every principal submatrix $C$ of $A$ has a positive vector $x$
of proper dimension such that $C x > 0$. $A$ is called a $\sr{P}$ matrix if all its
principal minors are positive, and called a $\sr{M}$-matrix if it is a
$\sr{P}$-matrix and all its off diagonal entries are not positive. The
class of $\sr{M}$-matrices is a subset of  $\sr{P}$-matrices,
which is a subset of the class of completely-$\sr{S}$-matrices. A
$d\times d$ matrix $A$ (not necessarily symmetric) is called a positive definite matrix if
$x' A x > 0$ for all non-null $d$-dimensional column real vectors $x$,
and $x'$ is the transpose of $x$.  A positive definite matrix is a
$\sr{P}$-matrix, and the inverse of a $\sr{P}$-matrix is a
$\sr{P}$-matrix.  See, e.g. \cite{BermPlem1994}. We denote the
$(i,j)$-entry of matrix $A$ by $A_{i,j}$ unless stated
otherwise. Similarly, the $i$-th entry of vector $x$ is denoted by
$x_{i}$.

There are two approaches for proving (\sect{problem}.a) and (\sect{problem}.b). One is to use the process limit when it is available. This approach, known as ``interchange of limits'',  has been adopted by the pioneering paper of \cite{GamaZeev2006} and many subsequent works (e.g, see \cite{BudhLee2009}). In this approach, it is typical that $\widehat{L}^{(n)}_{[1,d]}(\cdot)$ weakly converges to an SRBM with drift $\beta$ and reflection matrix $R$, which is assumed to be a complete-$\sr{S}$ matrix. Namely, denote this SRBM by $Z(\cdot)$, then it is the unique solution of the stochastic equation:
\begin{align}
\label{eq:SRBM-Z}
  Z(t) = Z(0) + X(t) + R Y(t) \ge 0, \qquad t \ge 0,
\end{align}
where $R$ is a $d \times d$ matrix, called a reflection matrix,
$X(\cdot)$ is the $d$-dimensional Brownian motion with drift vector
$\beta \in \dd{R}^{d}$ and covariance matrix $\Sigma$ whose $(i,j)$
entry is $\Sigma_{i,j}$ for $i,j =1,2,\ldots,d$, and
$Y(t) \equiv (Y_{1}(t), Y_{2}(t), \ldots, Y_{d}(t))$ in which
$Y_{i}(t)$ is non-decreasing and $\int_{0}^{t} Z_{i}(s) dY_{i}(s) = 0$
for $t \ge 0$ and $i\in \sr{N}_{d}$. The process
$Y(\cdot) = \{Y(t); t \ge 0\}$ is called a regulator, which keeps
$Z(t)$ to be nonnegative. For generalized Jackson networks,
  authors in \cite{GamaZeev2006} and \cite{BudhLee2009}) utilize the
  Skorohod map introduced in \cite{HarrReim1981} to prove (\sect{problem}.a).%  The Lipschitz
  % continuity of the Skorohod map 
  %  is a critical tool to prove the existence of the
  % process limit under both the diffusion-scale and fluid-scale. See,
  % .

Once (\sect{problem}.a) is verified, it follows from the process limit that $Z(\cdot)$ is stable and $\nu$ is the stationary distribution of $Z(\cdot)$ as argued below Theorem 3.2 of \cite{BudhLee2009}, while this stationary distribution is unique by Theorem (1) of \cite{HarrWill1987}. Thus, (\sect{problem}.a) together with the process limit proves (\sect{problem}.b).

Another approach to prove (\sect{problem}.a) and (\sect{problem}.b) is to use the stationary equation of $\widehat{L}^{(n)}_{[1,d]}(\cdot)$ for $n \ge 1$, which is called a basic adjoint relationship, BAR for short, in \cite{BravDaiMiya2025}.
Following \cite{BravDaiMiya2025},
we refer to this approach as BAR approach. In this approach, we define the moment generating functions $\varphi^{(n)}$ as
\begin{align*}
  & \varphi^{(n)}(\theta) = \dd{E}\left[e^{\br{\theta,\widehat{L}^{(n)}}}\right]=\int_{\R^d_+} e^{\langle \theta, x\rangle} \widehat{\nu}^{(n)}(dx)
      \qquad  \theta \in \dd{R}_{-}^{d},
\end{align*}
the  corresponding ``boundary'' moment generating functions
  \begin{align*}
\varphi^{(n)}_j(\theta) = \int_{\R^d_+} e^{\langle \theta, x\rangle} \widehat{\nu}^{(n)}_j(dx)
  \end{align*}
  for $j\in {\cal{N}}_d$ and $n \ge 1$, where
  $\widehat{\nu}^{(n)}_{j}$ is some probability measure on
  $\dd{R}_{+}^{d}$ supported by
  $\dd{R}_{+}^{d}|_{j=0} \equiv \{x \equiv (x_{1}, x_{2}, \ldots,
  x_{d}) \in \dd{R}_{+}^{d}; x_{j} = 0\}$. By the vague sequential
compactness of
\begin{align*}
  \Big\{\big(\widehat{\nu}^{(n)}, \widehat{\nu}^{(n)}_j, j\in {\cal N}_d\big); n \ge 1\Big\},
\end{align*}
for any sequence $\{n_{\ell}; \ell\ge 1\}\subset \{1, 2, \ldots \}$, 
there is a subsequence $\{n'_{\ell}; \ell \ge 1\}\subset \{n_\ell;\ell \ge 1\}$ such that
\begin{align}
\label{eq:mgf-phi}
 \lim_{\ell \to \infty} \varphi^{(n'_{\ell})}(\theta) \quad \text{ and } \quad  \lim_{\ell \to \infty} \varphi^{(n'_{\ell})}_{j}(\theta), 
\end{align}
exist for each $\theta \in \dd{R}_{-}^{d}$. These limits are 
denoted by $\varphi(\theta)$ and $\varphi_j(\theta)$, respectively.
Then, the BAR approach requires to prove the following condition:

\begin{mylist}{3}
\item [(\sect{problem}.c)]
$\varphi(\theta)$ and $\varphi_j(\theta)$  
satisfy the following equation
\begin{align}
\label{eq:limit-BAR}
  - \br{Rb,\theta} \varphi(\theta) + \frac 12 \br{\Sigma \theta, \Sigma \theta} \varphi(\theta) + \sum_{i=1}^{d} \theta_{i} \sum_{j=1}^{d} R_{i,j} b_{j} \varphi_{j}(\theta) = 0, \qquad \theta \in \dd{R}_{-}^{d}
\end{align}
for a $d$-dimensional positive vector $b$ and $d \times d$ matrices $\Sigma$ and $R$ such that $R$ is complete-$\sr{S}$ and $\Sigma$ is 
symmetric and positive definite. 
\end{mylist}

% \begin{remark}
% \label{rem:BAR-require}
Condition (\sect{problem}.c) is verified for the generalized Jackson
network and a multiclass queueing network under some extra conditions
in \cite{BravDaiMiya2017} and \cite{BravDaiMiya2025}, respectively. In
\cite{BravDaiMiya2017}, $\widehat{\nu}^{(n)}_{j}$ is defined as the
conditional distribution of $\widehat{L}^{(n)}_{[1,d]}$ given
$\widehat{L}^{(n)}_{j} = 0$. In \cite{BravDaiMiya2025},
$\varphi^{(n)}_j(\theta)$ is defined through (6.39) (using $\phi^{(r)}_{k}$
following the notional system there) and the corresponding
probability measure $\nu^{(n)}_j$ is supported by
$\dd{R}_{+}^{d}|_{j=0}$.

% Note that \eq{limit-BAR} has the same form as \eq{SRBM-BAR}. However, there is  crucial difference in them. First,
Note that $\varphi$ and $\varphi_{j}$ may not be the moment generating functions of probability distributions and may depend on the subsequence $\{n'_{\ell}; \ell \ge 1\}$. Hence, in the BAR approach, it is required to show that $\varphi$ and $\varphi_{j}$ are the moment generating functions of probability distributions which are unique and independent of the choice of the subsequence $\{n'_{\ell}; \ell \ge 1\}$. For this, we introduce the following notations. For $c \equiv (c_{1},c_{2}, \ldots, c_{d}) > 0$ and $D \subset \sr{N}_{d}$, let $c_{D}$ be the vector in $\dd{R}_{+}^{d}$ such that $[c_{D}]_{i} = c_{i} 1(i \in D)$ for $i \in \sr{N}_{d}$. Then, for negative number $\theta_{0}$, if the limit of $\varphi(\theta_{0} c_{D})$ exists and is independent of $c > 0$ as $\theta_{0} \uparrow 0$ for any $c > 0$,  we denote this limit by $\varphi(-0_{D})$. In particular, $\varphi(-0_{\sr{N}_{d}})$ is simply denoted by $\varphi(-0)$. We similarly define $\varphi_{j}(-0_{D})$ and $\varphi_{j}(-0)$. We are ready to present the following lemma.
%\red{(add definitions of $\phi_j(0-)$ and $\phi(0-)$)}

\begin{lemma}
  \label{lem:limit-tight}
  Let $\varphi(\theta)$ and $\varphi_j(\theta)$, $j\in {\cal N}_d$, be limits in \eq{mgf-phi}  along any convergent subsequence $\{n'_\ell; \ell\ge 1\}$.
  Assume (\sect{problem}.c) holds and  assume further  $\varphi(0-) = \varphi_{j}(0-) = 1$ for all $j \in \sr{N}_{d}$. Then, (\sect{problem}.a) and (\sect{problem}.b) hold. Furthermore, define SRBM $Z(\cdot)$ with drift $\beta \equiv - Rb$, covariance matrix $\Sigma$ and reflection matrix $R$. Then $Z(\cdot)$ has a unique stationary distribution.
\end{lemma}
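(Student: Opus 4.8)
The plan is to identify $\varphi$ and the $\varphi_{j}$ as Laplace transforms of probability measures, then to read \eq{limit-BAR} as the basic adjoint relationship (BAR) of the SRBM $Z(\cdot)$ tested on exponential functions, and finally to invoke the characterization of an SRBM's stationary distribution through its BAR. Fix any convergent subsequence $\{n'_{\ell}; \ell\ge 1\}$ as in \eq{mgf-phi} and, passing to a further subsequence, arrange in addition that $\widehat{\nu}^{(n'_{\ell})} \os{v}{\Rightarrow} \nu$ and $\widehat{\nu}^{(n'_{\ell})}_{j} \os{v}{\Rightarrow} \nu_{j}$ for each $j\in\sr{N}_{d}$, with $\nu$ a subprobability measure on $\dd{R}^d_+$ and $\nu_{j}$ a subprobability measure supported on $\dd{R}^d_+|_{j=0}$. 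For $\theta\in\dd{R}^d_-$ with all coordinates strictly negative, $x\mapsto e^{\br{\theta,x}}\in C_{0}(\dd{R}^d_+)$, so vague convergence and the convergence in \eq{mgf-phi} together give $\varphi(\theta)=\int e^{\br{\theta,x}}\nu(dx)$ and $\varphi_{j}(\theta)=\int e^{\br{\theta,x}}\nu_{j}(dx)$. Taking $\theta=\theta_{0}c$ with $c>0$ and $\theta_{0}\uparrow 0$ --- still strictly negative vectors --- monotone convergence gives $\varphi(0-)=\nu(\dd{R}^d_+)$ and $\varphi_{j}(0-)=\nu_{j}(\dd{R}^d_+|_{j=0})$, so the hypothesis $\varphi(0-)=\varphi_{j}(0-)=1$ forces $\nu$ and every $\nu_{j}$ to be probability measures. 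In particular no mass escapes, so $\widehat{\nu}^{(n'_{\ell})}\os{d}{\Rightarrow}\nu$; since every subsequence of $\{\widehat{\nu}^{(n)}\}$ admits a further subsequence to which this applies, Prokhorov's theorem yields the tightness in (\sect{problem}.a).

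Next, put $\beta=-Rb$ into \eq{limit-BAR}. The first term is $\br{\beta,\theta}\varphi(\theta)$, so the first two terms equal $\int_{\dd{R}^d_+}(\calg f_{\theta})\,d\nu$, where $f_{\theta}(x):=e^{\br{\theta,x}}$ and $\calg$ is the differential generator of $Z(\cdot)$ in the interior (with the diffusion coefficient as in \eq{limit-BAR}). Writing $R^{(j)}$ for the $j$-th column of $R$, the remaining term is $\sum_{j}b_{j}\br{R^{(j)},\theta}\varphi_{j}(\theta)=\sum_{j}\int(\partial_{R^{(j)}}f_{\theta})\,d\sigma_{j}$ with $\sigma_{j}:=b_{j}\nu_{j}$, a finite nonnegative measure on the face $\dd{R}^d_+|_{j=0}$ where the regulator $Y_{j}$ acts and $R^{(j)}$ is the direction of reflection. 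Thus $(\nu,\sigma_{1},\dots,\sigma_{d})$ is a finite solution of the BAR for $Z(\cdot)$ against the class $\{f_{\theta}:\theta\in\dd{R}^d_-\}$, with $\nu$ a probability measure; note $\calg f_{\theta}$ and $\partial_{R^{(j)}}f_{\theta}$ are bounded on $\dd{R}^d_+$ for $\theta\le 0$, so no integrability question arises.

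The remaining and, I expect, hardest step is the converse direction. Since $R$ is completely-$\sr{S}$ and $\Sigma$ is symmetric positive definite, $Z(\cdot)$ is a well-defined Feller continuous strong Markov process, and one must show that a probability measure $\nu$ together with finite boundary measures $\sigma_{j}$ solving the BAR against \emph{all} the exponentials $f_{\theta}$ is necessarily the stationary distribution of $Z(\cdot)$ (so that $Z(\cdot)$ then has a stationary distribution at all). I would obtain this by combining (i) a determining-class/verification argument for reflected Brownian motion --- the exponentials $f_{\theta}$, and the monomial-times-exponential functions recovered from them by $\theta$-differentiation, separate finite measures and suffice to upgrade ``solves the BAR'' to ``is invariant for the SRBM semigroup'' --- with (ii) the uniqueness of the SRBM's stationary distribution when it exists, i.e.\ Theorem~(1) of \cite{HarrWill1987}; for (i) I would draw on the BAR-based characterization of SRBM stationary distributions used in \cite{BravDaiMiya2025}. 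Granting this, $\nu$ coincides with the unique stationary distribution of $Z(\cdot)$ and hence is independent of the chosen subsequence --- this is the uniqueness in (\sect{problem}.b), and combined with the tightness above it gives $\widehat{\nu}^{(n)}\os{d}{\Rightarrow}\nu$ --- while $Z(\cdot)$ has a unique stationary distribution, as claimed. The delicate point is that one has only the BAR for exponential functions and only \emph{a priori} finiteness --- no moment or smoothness control --- of $\nu$ and the $\sigma_{j}$ to feed into the verification argument.
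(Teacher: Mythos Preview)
Your plan is correct and matches the paper's proof: identify $\varphi,\varphi_{j}$ as transforms of probability measures via $\varphi(0-)=\varphi_{j}(0-)=1$, read \eq{limit-BAR} as the BAR of the SRBM in moment-generating-function form, and conclude by the BAR characterization of stationarity together with uniqueness. For the step you flag as delicate --- upgrading the exponential-only BAR to stationarity --- the paper simply cites the equivalence in Appendix~D of the arXiv version of \cite{DaiMiyaWu2014} and the sufficiency result of \cite{DaiKurt1994}, with uniqueness from \cite{HarrWill1987}.
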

\begin{proof}%[Proof of \lem{limit-tight}]
Since $\varphi(0-) = \varphi_{j}(0-) = 1$ for all
  $j \in \sr{N}_{d}$, it follows that  $\varphi(\theta)$ and $\varphi_j(\theta)$ are
  moment generating functions of some probability measures $\nu$ and
  $\nu_j$, respectively, and $\nu_j$ has support on
  $\dd{R}_{+}^{d}|_{j=0}$.  Because $R$ is assumed to be
  completely-${\cal S}$ and $\Sigma$ is assumed to be symmetric and
  positive definite, it follows from \cite{TaylWill1993} that an SRBM
  $Z(\cdot)$ is uniquely defined (in distribution) for each initial
  distribution with drift $\beta = - Rb$, covariance $\Sigma$ and
  reflection matrix $R$.  From (\sect{problem}.c),
  $(\nu, \nu_1, \ldots, \ldots, \nu_d)$ satisfies a basic adjoint
  adjoint relationship (BAR) in Theorem 8.1 of \cite{HarrWill1987},
  which is equivalent to the moment generating function version
  \eq{limit-BAR}; see Appendix D in the arXiv version of
  \cite{DaiMiyaWu2014} for the equivalence.  It follows from
  \citet{DaiKurt1994} that $\nu$ is a stationary distribution of SRBM
  $Z$. The stationary distribution $\nu$ is unique following
  the  argument in Section 7 of 
  \cite{HarrWill1987}. (In \cite{HarrWill1987}, the reflection matrix
  is assumed to be an ${\cal M}$-matrix. However, its uniqueness
  argument is  general, applicable to completely-${\cal S}$ matrix as well; see also the
  comment four lines below (2.5) in \cite{DaiDiek2011}). Thus,
  (\sect{problem}.a) and (\sect{problem}.b) follow from the fact that
  $\nu$ is a probability measure and $\nu$ is the
  unique stationary distribution of the SRBM.
\end{proof}

In applications, the invertibility of the reflection matrix $R$ is usually required to define it. In this case, the assumptions in \lem{limit-tight} can be weakened as shown below.

\begin{corollary}\rm
  \label{cor:limit-tight}
  Let $\varphi(\theta)$ and $\varphi_j(\theta)$, $j\in {\cal N}_d$, be
  limits in \eq{mgf-phi} along any convergent subsequence
  $\{n'_\ell; \ell\ge 1\}$.  Assume (\sect{problem}.c) and that $R$ is
  invertible.  Then, (i) $\varphi(0-) = \varphi_{i}(0-)$ for all
  $i \in \sr{N}_{d}$.  As a consequence, (ii) $\varphi(0-) = 1$ is
  necessary and sufficient condition for (\sect{problem}.a) and
  (\sect{problem}.b) to hold.
\end{corollary}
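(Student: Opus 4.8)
The plan is to obtain part (i) from a first-order expansion of the limit BAR \eq{limit-BAR} as $\theta\uparrow 0$, and then to read off part (ii) by combining (i) with \lem{limit-tight} (for sufficiency) and with the identification of $\varphi(0-)$ as the total mass of a vague sub-limit of $\{\widehat{\nu}^{(n)}\}$ (for necessity). For (i), note that $(Rb)_i=\sum_{j}R_{i,j}b_j$, so the first and third terms of \eq{limit-BAR} combine and for $\theta\in\dd{R}_-^d$
\[
\sum_{i\in\sr{N}_d}\theta_i\sum_{j\in\sr{N}_d}R_{i,j}b_j\big(\varphi_j(\theta)-\varphi(\theta)\big)+\tfrac12\br{\Sigma\theta,\Sigma\theta}\,\varphi(\theta)=0 .
\]
Each prelimit transform $\varphi^{(n)}$ and $\varphi^{(n)}_j$ is the Laplace transform of a probability measure on $\dd{R}_+^d$, hence coordinatewise nondecreasing on $\dd{R}_-^d$ with values in $[0,1]$; the limits $\varphi$ and $\varphi_j$ inherit this, so the directional limits $\lim_{\theta_0\uparrow 0}\varphi(\theta_0 u)$ and $\lim_{\theta_0\uparrow 0}\varphi_j(\theta_0 u)$ exist in $[0,1]$, and a comparison of two directions $u,u'>0$ shows they do not depend on $u$; call them $\varphi(0-)$ and $\varphi_j(0-)$. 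Now fix $u>0$, put $\theta=\theta_0 u$ with $\theta_0<0$ in the display, divide by $\theta_0$, and let $\theta_0\uparrow 0$: the quadratic term is $O(\theta_0)$ and vanishes, leaving $\sum_i u_i\sum_j R_{i,j}b_j\big(\varphi_j(0-)-\varphi(0-)\big)=0$ for every $u>0$. A linear form vanishing on the open positive orthant is identically zero, so $Rc=0$ with $c_j:=b_j\big(\varphi_j(0-)-\varphi(0-)\big)$; invertibility of $R$ forces $c=0$, and since $b_j>0$ we conclude $\varphi_j(0-)=\varphi(0-)$ for all $j\in\sr{N}_d$, which is (i).

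\textbf{Part (ii).} Sufficiency: if $\varphi(0-)=1$, then by (i) also $\varphi_j(0-)=1$ for all $j$, so the hypotheses of \lem{limit-tight} are met and (\sect{problem}.a), (\sect{problem}.b) follow. Necessity: it suffices to use (\sect{problem}.a) alone, since $[(\sect{problem}.\text{a})\text{ and }(\sect{problem}.\text{b})]$ trivially implies $[(\sect{problem}.\text{a})]$. By the vague sequential compactness recalled above, pass to a further subsequence of $\{n'_\ell\}$ along which $\widehat{\nu}^{(n'_\ell)}\os{v}{\Rightarrow}\nu$ for some subprobability measure $\nu$ on $\dd{R}_+^d$; the moment generating functions still converge to the same $\varphi$, and since $x\mapsto e^{\br{\theta,x}}\in C_0(\dd{R}_+^d)$ for $\theta<0$ we get $\varphi(\theta)=\int e^{\br{\theta,x}}\,\nu(dx)$, hence $\varphi(0-)=\nu(\dd{R}_+^d)$ by monotone convergence. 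Tightness of $\{\widehat{\nu}^{(n)}\}$ gives, for each $\varepsilon>0$, a compact $C$ with $\nu(C)\ge\limsup_\ell\widehat{\nu}^{(n'_\ell)}(C)\ge 1-\varepsilon$ (portmanteau for vague convergence on the locally compact space $\dd{R}_+^d$), so $\nu(\dd{R}_+^d)=1$, i.e.\ $\varphi(0-)=1$.

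\textbf{Main obstacle.} There is no genuinely hard step; the two points that need care are the existence and $u$-independence of $\varphi(0-)$ and $\varphi_j(0-)$, which is exactly where coordinatewise monotonicity of the prelimit transforms is used, and the identity $\varphi(0-)=\nu(\dd{R}_+^d)$, which rests on the fact that exponentials with strictly negative exponent vanish at infinity so that vague convergence on $\dd{R}_+^d$ already transmits convergence of the Laplace functionals. The algebraic core --- passing from ``the linear form in $u$ vanishes'' to $Rc=0$ to $c=0$ --- uses only that $R$ is invertible and that $b>0$.
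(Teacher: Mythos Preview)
Your proof is correct and takes essentially the same approach as the paper: substitute $\theta=\theta_0 c$ into \eq{limit-BAR}, divide by $\theta_0$, let $\theta_0\uparrow 0$, and use invertibility of $R$ together with $b>0$ to obtain (i), then invoke \lem{limit-tight} for the sufficiency in (ii). The only differences are tactical---the paper sends $c_i\downarrow 0$ for $i\neq k$ first to isolate the $k$th equation whereas you keep $u>0$ generic and vary it afterward, and the paper dismisses necessity in (ii) as ``clear'' while you spell out the vague-limit identification $\varphi(0-)=\nu(\dd{R}_+^d)$---but neither changes the substance.
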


\begin{proof}%[Proof of \cor{limit-tight}]
  Fix the subsequence $\{n'_{\ell}; \ell \ge 1\}$ so that \eq{mgf-phi}
  holds. We first prove (i). Let $c = (c_{1},c_{2}, \ldots, c_{d}) \in \dd{R}_{+}^{d}$ and let $\theta_{0} < 0$. Then, we substitute $\theta = \theta_{0} c$ into \eq{limit-BAR}, divide by $\theta_{0}$, and obtain 
\begin{align}
\label{eq:Rb-c}
 \br{Rb,c} \varphi(\theta_{0} c) = \frac 12 \theta_{0} \br{\Sigma c, \Sigma c} \varphi(\theta_{0} c) + \sum_{i=1}^{d} c_{i} \sum_{j=1}^{d} R_{i,j} b_{j} \varphi_{j}(\theta_{0} c), \qquad \theta_{0} < 0, \; c \ge 0.
\end{align}
In \eq{Rb-c}, first let $c_{i} \downarrow 0$ for $i \not= k$ while $c_{k} >0$ is unchanged, then let $\theta_{0} \uparrow 0$, which yields
\begin{align} 
\label{eq:Rb-ci}
 \sum_{j=1}^{d} R_{k,j} b_{j} \varphi(0-) = \sum_{j=1}^{d} R_{k,j} b_{j} \varphi_{j}(0-), \qquad k \in \sr{N}_{d}.
\end{align}
Since $R$ is invertible, $R$ has the inverse $R^{-1}$. Multiply both sides of \eq{Rb-ci} by $[R^{-1}]_{i,k}$ from the left and sum up for all $k \in \sr{N}_{d}$, then $b_{i} \varphi(0-) = b_{i} \varphi_{i}(0-)$ for $i \in \sr{N}_{d}$. Thus,
(i) is proved because $b > 0$ by (\sect{problem}.c). Now we prove (ii). Assume $\varphi(0-) = 1$. By (i),
$\varphi(0-) = \varphi_{j}(0-) = 1$ for all $j \in \sr{N}_{d}$. Hence,
(\sect{problem}.a) and (\sect{problem}.b) hold by
\lem{limit-tight}. Thus, the sufficiency of $\varphi(0-) = 1$ is
proved. It is clear that  $\varphi(0-) = 1$ is
a necessary condition. Thus, (ii) is proved.
\end{proof}

% Note that (\sect{problem}.d) and (\sect{problem}.e) are easily verified in the approach of interchange of limits because the process $\widehat{L}^{(n)}_{[1,d]}(\cdot)$ weakly converges to the SRBM $Z(\cdot)$ as $n \to \infty$.

\section{Tight matrices and the main results}
%\label{sec:tight}
\label{sec:main}
\setnewcounter

In this paper, we will seek verifiable conditions for \eq{limit-hnu}
to hold and for $\widehat{\nu}$ to be identified. One component of the BAR approach
is to  prove (\sect{problem}.c). By \lem{limit-tight}, the remaining component of the BAR approach is to prove $\varphi(0-) = \varphi_{j}(0-) = 1$, $j \in \sr{N}_{d}$.
To prove the latter component of the BAR approach,  we take the  following definition from \citet{BravDaiMiya2025}.

\begin{definition}[Tight matrix]
\label{dfn:tight}
Fix a positive integer $d$. Let $b$ be a $d$-dimensional positive vector and let $R$ be a $d \times d$ completely-$\sr{S}$-matrix. Then, $(R,b)$ is called a \emph{tight system} if the set of variables $\{x_{D} \in [0,1]; D \subset \sr{N}_{d}\}$ and $\{x^{(j)}_{D}; D \subset \sr{N}_{d}\}$ for $j \in \sr{N}_{d}$ satisfying the following set of linear equations and inequalities:
\begin{align}  
\label{eq:tight-c1}
& \sum_{j\in\sr{N}_{d}} R_{ij} b_{j} (x_{D}^{(j)} - x_{D}) = 0,  \qquad i\in D \subset \sr{N}_{d},\\
\label{eq:tight-c2}
&x_{D} \geq x_{D^{'}}, \qquad x_{D}^{(j)} \geq x_{D^{'}}^{(j)}, \qquad j\in \sr{N}_{d}, \quad D \subset D^{'} \subset \sr{N}_{d}, \\
\label{eq:tight-c3} 
&x_{D}^{(j)} = x_{D \backslash \{j\}}^{(j)}, \quad j\in D \subset \sr{N}_{d}, \\ & x_{\emptyset} = x_{\emptyset}^{(j)} = 1,  \quad   j\in \sr{N}_{d}
  \label{eq:tight-c4}
\end{align}
has the unique solution:
\begin{align}
\label{eq:xD-s}
  x_{D} = x^{(j)}_{D} = 1, \qquad D \subset \sr{N}_{d}, \quad j \in \sr{N}_{d}.
\end{align}
In particular, if $(R,b)$ is a tight system for all $b > 0$, then $R$ is called a \emph{tight matrix}. 
\end{definition}

\begin{remark}
\label{rem:tight}
The condition \eq{tight-c1} comes from \eq{Rb-c}. To see this, substitute $c = c_{D}$ into \eq{Rb-c} and let $\theta_{0} \uparrow 0$. Then we have
\begin{align*}
   \sum_{k \in D} \sum_{j=1}^{d} c_{k} R_{k,j} b_{j} \varphi(-0_{D}) = \sum_{k \in D} \sum_{j=1}^{d} c_{k} R_{k,j} b_{j} \varphi_{j}(-0_{D}), \qquad D \subset \sr{N}_{d}.
\end{align*}
Hence, if we put $x_{D} = \varphi(-0_{D})$ and $x^{(j)}_{D} = \varphi_{j}(-0_{D})$, then we have \eq{tight-c1}. The remaining conditions \eq{tight-c2}--\eq{xD-s} obviously follow from this setting of $x_{D}$ and $x^{(j)}_{D}$.
\end{remark}

The significance of introducing a tight matrix is demonstrated in Lemma 4.2 of \cite{BravDaiMiya2025}. Namely, by this lemma and \lem{limit-tight}, we have the following fact.
\begin{lemma}\rm
\label{lem:tight matrix}
Assume that (\sect{problem}.c) hold. If $(R,b)$ is a tight system, then (\sect{problem}.a) and (\sect{problem}.b) hold, and therefore we have \eq{limit-hnu}.
\end{lemma}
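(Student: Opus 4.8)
The plan is to reduce the statement to \lem{limit-tight}: that lemma already shows that (\sect{problem}.a), (\sect{problem}.b), and hence \eq{limit-hnu}, follow from (\sect{problem}.c) together with the normalization $\varphi(0-)=\varphi_j(0-)=1$ for all $j\in\sr{N}_d$. So it suffices to deduce this normalization from the hypothesis that $(R,b)$ is a tight system. Following the idea recorded in \rem{tight}, I would read off a solution of the linear system \eq{tight-c1}--\eq{xD-s} of \dfn{tight} from the boundary values of the limiting moment generating functions, namely $x_D:=\varphi(-0_D)$ and $x_D^{(j)}:=\varphi_j(-0_D)$ for $D\subset\sr{N}_d$, $j\in\sr{N}_d$. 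Since $(R,b)$ is tight, \emph{any} such solution must equal the all-ones solution \eq{xD-s}; taking $D=\sr{N}_d$ then gives $\varphi(-0)=\varphi_j(-0)=1$, i.e.\ $\varphi(0-)=\varphi_j(0-)=1$, and \lem{limit-tight} finishes the proof.

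Carrying this out, the steps, in order, are as follows. \textbf{Step 1: the boundary limits are well defined.} Each $\varphi^{(n)}$ and $\varphi^{(n)}_j$ is the moment generating function of a probability measure on $\R^d_+$, hence is nondecreasing in the coordinatewise order on $\R^d_-$ and bounded by $1$; both properties pass to the limits $\varphi$ and $\varphi_j$. Thus, for fixed $c>0$, the map $\theta_0\mapsto\varphi(\theta_0 c_D)$ is nondecreasing and bounded on $(-\infty,0)$, so $\lim_{\theta_0\uparrow0}\varphi(\theta_0 c_D)$ exists in $[0,1]$; comparing the rays $\{\theta_0 c_D\}$ and $\{\theta_0 c'_D\}$ for two positive vectors $c,c'$ by a rescaling of $\theta_0$ shows the limit is independent of $c$. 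Hence $x_D=\varphi(-0_D)$ and $x_D^{(j)}=\varphi_j(-0_D)$ are well defined, as in the paragraph preceding \lem{limit-tight}, and lie in $[0,1]$. \textbf{Step 2: the linear system.} For \eq{tight-c1}, substitute $c=c_D$ into \eq{Rb-c} and let $\theta_0\uparrow0$: the term $\tfrac12\theta_0\br{\Sigma c_D,\Sigma c_D}\varphi(\theta_0 c_D)$ vanishes since $\theta_0\to0$ and $\varphi$ is bounded, leaving $\sum_{k\in D}c_k\sum_{j}R_{k,j}b_j\big(x_D^{(j)}-x_D\big)=0$ for every $c>0$; varying the coordinates $c_k$, $k\in D$, yields \eq{tight-c1}. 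Inequality \eq{tight-c2} is monotonicity: $D\subset D'$ gives $c_D\le c_{D'}$, hence $\theta_0 c_D\ge\theta_0 c_{D'}$ for $\theta_0<0$, hence $x_D\ge x_{D'}$, and likewise for $x_D^{(j)}$. Equation \eq{tight-c3} holds because $\widehat\nu_j^{(n)}$ is supported on $\dd{R}_{+}^{d}|_{j=0}$, so $\varphi^{(n)}_j$, and therefore $\varphi_j$, does not depend on its $j$-th argument; thus $\varphi_j(\theta_0 c_D)=\varphi_j(\theta_0 c_{D\setminus\{j\}})$ for $j\in D$. Finally \eq{tight-c4} is immediate, since $c_\emptyset=0$ gives $x_\emptyset=\varphi(0)=1$ and $x_\emptyset^{(j)}=\varphi_j(0)=1$. \textbf{Step 3: conclude.} The family $(\{x_D\},\{x_D^{(j)}\})$ solves \eq{tight-c1}--\eq{xD-s}, so tightness of $(R,b)$ forces $x_D=x_D^{(j)}=1$ for all $D,j$. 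In particular $\varphi(-0)=\varphi_j(-0)=1$, and using Step 1's monotonicity once more to identify these directional limits with the limits as $\theta\uparrow0$, we get $\varphi(0-)=\varphi_j(0-)=1$. Now \lem{limit-tight} gives (\sect{problem}.a), (\sect{problem}.b), and \eq{limit-hnu}.

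I do not expect a serious obstacle here. The genuinely substantive content has been packaged elsewhere: the \emph{unique} solvability of \eq{tight-c1}--\eq{xD-s} is exactly what ``$(R,b)$ is a tight system'' means, and \lem{limit-tight} supplies the passage from that normalization to \eq{limit-hnu}; the present lemma is only the bridge between the two (this is the content of Lemma~4.2 of \cite{BravDaiMiya2025}). The only points needing a little care are the well-definedness of $\varphi(-0_D)$ and $\varphi_j(-0_D)$ independently of the direction $c$ (Step 1) and the identity \eq{tight-c3}, which rests on the support property of $\widehat\nu_j^{(n)}$ built into (\sect{problem}.c); both come down to monotonicity of limits of moment generating functions together with the definition of the boundary measures, so a fully detailed write-up is essentially bookkeeping.
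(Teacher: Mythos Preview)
Your proposal is correct and follows exactly the route the paper indicates: the paper does not give a self-contained proof here but simply invokes Lemma~4.2 of \cite{BravDaiMiya2025} together with \lem{limit-tight}, and your Steps~1--3 are precisely the content of that cited lemma as sketched in \rem{tight}. Your write-up is in fact more explicit than the paper's, correctly handling the well-definedness and direction-independence of $\varphi(-0_D)$ via monotonicity, the passage from the $c$-averaged identity to \eq{tight-c1} by varying $c_k$ over $k\in D$, and the support property for \eq{tight-c3}.
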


It was proved in \cite{BravDaiMiya2017} that each $\sr{M}$-matrix is a tight matrix.
In this paper, 
we present  two theorems for verifying the tightness of $R$. The first theorem is limited to the
case when $d=2$, but the conditions are necessary and sufficient for
$R$ to be tight. The second theorem is for the case when $d$ is general,
but the conditions are sufficient only. These theorems are proved in
\sectn{proof-theorem}.

\begin{theorem}
\label{thm:1}
For $d=2$, assume
\begin{mylist}{0}
\item [(\sect{main}.a)] $R_{1,1} > 0$ and $R_{2,2} > 0$,
\end{mylist}
and define the following conditions on $R$.
\begin{mylist}{0}
\item [(\sect{main}.b)] $R_{1,2} \le 0$, $R_{2,1} \le 0$ and $R_{1,1} R_{2,2} - R_{1,2} R_{2,1} > 0$,
\item [(\sect{main}.c)] $R_{1,2} < 0, R_{2,1} > 0$ or $R_{1,2} > 0, R_{2,1} < 0$,
\item [(\sect{main}.d)] $R_{1,2}>0$, $R_{2,1} > 0$, or $R_{1,2} = 0, R_{2,1} > 0$, or $R_{2,1} = 0, R_{1,2} > 0$,
\item [(\sect{main}.e)] $R_{1,2} \le 0$, $R_{2,1} \le 0$ and $R_{1,1} R_{2,2} - R_{1,2} R_{2,1} \le 0$.
\end{mylist}
Then, the conditions (\sect{main}.b)--(\sect{main}.e) are mutually exclusive, and $R$ is a tight completely-$\sr{S}$-matrix if and only if either (\sect{main}.b) or (\sect{main}.c) hold. If the conditions (\sect{main}.d) holds, $R$ is completely-$\sr{S}$ but not tight. If the conditions (\sect{main}.e) holds, $R$ is not completely-$\sr{S}$. 
\end{theorem}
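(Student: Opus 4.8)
The plan is to separate the purely combinatorial bookkeeping (that the four conditions partition the parameter space), the completely-$\sr{S}$ classification, and the tightness classification, and to treat the tightness part by writing out the linear system of Definition~\ref{dfn:tight} explicitly for $d=2$.

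First, with $R_{1,1},R_{2,2}>0$ fixed I would split on the sign pattern of $(R_{1,2},R_{2,1})$: if both entries are $\le 0$, then $R_{1,1}R_{2,2}-R_{1,2}R_{2,1}>0$ is (\sect{main}.b) and $\le 0$ is (\sect{main}.e); if the two entries have strictly opposite signs we are in (\sect{main}.c); if both are $\ge 0$ and not both $0$ we are in (\sect{main}.d); and the leftover patterns (one entry negative and the other $0$, or both $0$) have determinant $R_{1,1}R_{2,2}>0$ and fall under (\sect{main}.b). This shows (\sect{main}.b)--(\sect{main}.e) are mutually exclusive and cover every $R$ with positive diagonal. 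Next, for a $2\times 2$ matrix with positive diagonal the only non-automatic principal-submatrix requirement for completely-$\sr{S}$ is the existence of $x>0$ with $Rx>0$; I would verify it by cases: $x=(1,1)$ works when $R_{1,2},R_{2,1}\ge 0$; $x=(M,1)$ with $M$ large works when the off-diagonal entries have opposite signs; and when $R_{1,2},R_{2,1}\le 0$ the pair of inequalities $R_{1,1}x_1>|R_{1,2}|x_2$ and $R_{2,2}x_2>|R_{2,1}|x_1$ has a positive solution if and only if $R_{1,1}R_{2,2}>R_{1,2}R_{2,1}$. Hence (\sect{main}.b), (\sect{main}.c), (\sect{main}.d) are completely-$\sr{S}$ and (\sect{main}.e) is not, which disposes of the last sentence of the theorem and the completely-$\sr{S}$ half of the (\sect{main}.d) sentence.

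For the tightness claims I would first reduce the system \eq{tight-c1}--\eq{tight-c4} for $d=2$ to a small explicit system: \eq{tight-c4} and \eq{tight-c3} eliminate most variables, leaving a handful of unknowns (those attached to $D\in\{\{1\},\{2\},\{1,2\}\}$) constrained by \eq{tight-c1} for those $D$, by \eq{tight-c2}, and by $x_D\in[0,1]$. The solution set of just the equalities is, generically, an affine line through the all-ones solution (after separating a couple of degenerate coefficient configurations, e.g. $R_{1,1}b_1+R_{1,2}b_2=0$, which are settled by inspection), so tightness of $(R,b)$ is exactly the statement that \eq{tight-c2} together with $x_D\in[0,1]$ forces the line parameter to $0$. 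In case (\sect{main}.b), $R$ is a nonsingular $\sr{M}$-matrix, hence tight by the result of \citet{BravDaiMiya2017} recalled above \lem{tight matrix} (this is also visible directly from the reduced system). In case (\sect{main}.c) I would use the invariance of \eq{tight-c1}--\eq{tight-c4} under simultaneously relabeling the two coordinates to assume $R_{1,2}<0<R_{2,1}$; then $\det R>0$ automatically, so the solution line is genuinely one-dimensional, and the equation for $D=\{1\}$ together with the constraints $x_{\{1\}}\le 1$ and $x_{\{1\}}\ge x_{\{1,2\}}$ (with $x_{\{1,2\}}\le 1$) forces the line parameter to $0$ for every $b>0$; hence $R$ is tight. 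In case (\sect{main}.d), with $R_{1,2},R_{2,1}\ge 0$ not both $0$, I would instead exhibit, e.g. for $b=(1,1)$, an explicit nondegenerate segment of solutions of the full system distinct from the all-ones solution: as the line parameter decreases slightly from $0$, the nonnegative off-diagonal entries make every coordinate move into $[0,1]$ while still respecting \eq{tight-c2}, so $(R,b)$ is not a tight system and $R$ is not a tight matrix.

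The real content is the case (\sect{main}.c) argument: one has to combine the (small) linear system with the monotonicity inequalities and genuinely use that the two off-diagonal entries have opposite signs; the same computation shows the argument fails precisely when they share a sign, which is exactly why (\sect{main}.d) is not tight. The remaining obstacles are only bookkeeping: checking exhaustiveness in the first step, and isolating the few degenerate configurations (where a combination such as $R_{1,1}b_1+R_{1,2}b_2$ or $\det R$ vanishes) in the tightness analysis, each of which is settled by direct substitution.
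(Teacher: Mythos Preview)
Your proposal is correct and follows essentially the same route as the paper: write out the tight-system equations explicitly for $d=2$, use the invertibility of $R$ (automatic in cases (\sect{main}.b) and (\sect{main}.c)) to collapse the $D=\{1,2\}$ equations to $x^{(1)}_{\{1,2\}}=x^{(2)}_{\{1,2\}}=x_{\{1,2\}}$, then combine the sign of an off-diagonal entry with the monotonicity constraint $x_{\{i\}}\ge x_{\{1,2\}}$ in the single-coordinate equation to force all variables to $1$, while for (\sect{main}.d) both you and the paper exhibit an explicit one-parameter family of nontrivial solutions. The only cosmetic differences are that you invoke the cited $\sr{M}$-matrix result for (\sect{main}.b) rather than repeating the direct computation, and you make the exhaustiveness of (\sect{main}.b)--(\sect{main}.e) and the completely-$\sr{S}$ classification more explicit than the paper's proof does.
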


Note that $R$ is an $\sr{M}$-matrix if and only if the conditions (\sect{main}.a) and (\sect{main}.b) hold, while $R$ is a $\sr{P}$-matrix if and only if (\sect{main}.a) and $R_{1,1} R_{2,2} - R_{1,2} R_{2,1} > 0$ hold. Hence, an $\sr{M}$-matrix $R$ is tight, but not all $\sr{P}$-matrix $R$ is tight because (\sect{main}.d) may hold for a $\sr{P}$-matrix $R$.

\begin{theorem}\rm
\label{thm:2}
Assume that  $d \times d$ matrix $R$ is a $\sr{P}$-matrix that satisfies the following conditions
\begin{align}
\label{eq:R1}
 & R_{i,j} = \left\{
\begin{array}{ll}
 \mbox{a real number}, & j \ge i+1, \; 2  \le i \le d-1,\\
 \mbox{positive}, \quad & j=i,  \\
 \mbox{negative}, \quad & j = i-1, \; 2  \le i \le d,\\
 0, \quad & j \le  i-2, \; 3  \le i \le d.
\end{array}
\right.
\end{align}
Then $R$ is a tight matrix.
\end{theorem}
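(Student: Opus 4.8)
The goal is to verify that for every positive vector $b$ the linear system \eq{tight-c1}--\eq{xD-s} has only the solution \eq{xD-s}; since a $\sr{P}$-matrix is in particular completely-$\sr{S}$, is nonsingular, and has all principal minors positive, this gives that $R$ is a tight matrix. First I would make two reductions. Taking $D=\sr{N}_d$ in \eq{tight-c1}, the equation holds for \emph{every} $i$, so it reads $R\xi=0$ with $\xi_j:=b_j\big(x_{\sr{N}_d}^{(j)}-x_{\sr{N}_d}\big)$; nonsingularity of $R$ forces $\xi=0$, i.e. $x_{\sr{N}_d}^{(j)}=x_{\sr{N}_d}$ for all $j$. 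Combined with the monotonicity \eq{tight-c2} and the normalisation \eq{tight-c4}, this puts every variable $x_D$ and $x_D^{(j)}$ in the interval $[x_{\sr{N}_d},1]$ — the upper bound from comparing with $D=\emptyset$, the lower bound from comparing with $D=\sr{N}_d$ and the identity just obtained. Hence the whole statement reduces to the single scalar identity $x_{\sr{N}_d}=1$: once that holds, every other variable is squeezed to $1$ and \eq{tight-c3} is automatic.

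The heart of the matter is to prove $x_{\sr{N}_d}=1$, and this is where the band structure $R_{ij}=0$ for $j\le i-2$, the signs $R_{ii}>0$, $R_{i,i-1}<0$, and the $\sr{P}$-property all enter. Writing $u_D:=1-x_D\ge 0$ and $u_D^{(j)}:=1-x_D^{(j)}\ge 0$, the system becomes: the $u$'s are monotone nondecreasing in $D$, $u_\emptyset=u_\emptyset^{(j)}=0$, $u_D^{(j)}=u_{D\setminus\{j\}}^{(j)}$ for $j\in D$, and $\sum_j R_{ij}b_j u_D^{(j)}=(Rb)_i\,u_D$ for $i\in D$; the target is $u_{\sr{N}_d}=0$. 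The plan is to peel off coordinates starting from the largest index: whenever $d\in D$ the row-$d$ equation only involves coordinates $d-1$ and $d$, namely $R_{d,d-1}b_{d-1}u_D^{(d-1)}+R_{dd}b_d u_D^{(d)}=(Rb)_d u_D$, and since $R_{dd}>0>R_{d,d-1}$ it lets one solve for the boundary quantity $u_{D\setminus\{d\}}^{(d)}$ in terms of lower-indexed ones. Iterating this idea up through the rows shows that for each $D$ the vector $\xi^D:=\big(b_j(x_D^{(j)}-x_D)\big)_j$ equals $R^{-1}w^D$ for some $w^D$ supported on $\sr{N}_d\setminus D$, and the key structural input is that the relevant entries of $R^{-1}$ have a definite sign: an upper Hessenberg $\sr{P}$-matrix with positive diagonal and negative sub-diagonal satisfies $[R^{-1}]_{jk}>0$ for all $j\ge k$ (solve $Rv=e_k$ upward from the last row; the ratios that occur are trailing principal minors of $R$ divided by products of the nonzero sub-diagonal entries, hence of controlled sign by the $\sr{P}$-property). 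Feeding these sign constraints back into the monotonicity inequalities and the identities $u_D^{(j)}=u_{D\setminus\{j\}}^{(j)}$ should let one propagate the boundary value $u_\emptyset=0$ through the subset lattice and reach $u_{\sr{N}_d}=0$. An essentially equivalent route is induction on $d$: the trailing $(d-1)\times(d-1)$ principal submatrix of $R$ is again of the form \eq{R1}, hence tight by the inductive hypothesis; the row-$d$ equations are used to eliminate the $d$-th coordinate, tightness in dimension $d-1$ forces the remaining variables to $1$, and the row-$d$ equations then recover the last ones.

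The step I expect to be the main obstacle is the bookkeeping forced by \eq{tight-c3} in combination with the two monotonicity directions: for $j\in D$ it ties $x_D^{(j)}$ to the strictly \emph{smaller} set $D\setminus\{j\}$, whereas for $j\notin D$ it ties the boundary unknown $x_D^{(j)}$ (via \eq{tight-c3} applied to $D\cup\{j\}$) to the strictly \emph{larger} set $D\cup\{j\}$. Because of this, neither a straightforward induction on increasing $|D|$ nor one on decreasing $|D|$ closes by itself — the equations at a set of size $m$ reference unknowns at both sizes $m-1$ and $m+1$ — and it is precisely the band structure of $R$, together with the sign of the lower triangle of $R^{-1}$ coming from the $\sr{P}$-property, that breaks this circularity. (For $d=2$ the statement is already contained in \thm{1}: there \eq{R1} forces $R_{1,1},R_{2,2}>0$ and $R_{2,1}<0$, which is case (\sect{main}.b) or (\sect{main}.c) depending on the sign of $R_{1,2}$, and $\det R>0$ holds since $R$ is a $\sr{P}$-matrix.) Checking that the elimination of the $d$-th coordinate is compatible with \emph{all} of \eq{tight-c2}--\eq{xD-s} — i.e. that the reduced data is again the data of a tight system in dimension $d-1$ — is the part requiring the most care.
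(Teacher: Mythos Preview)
Your first reduction is correct and matches the paper exactly: invertibility of $R$ at $D=\sr{N}_d$ gives $x_{\sr{N}_d}^{(j)}=x_{\sr{N}_d}$, and then monotonicity squeezes every variable into $[x_{\sr{N}_d},1]$, so only $x_{\sr{N}_d}=1$ remains. Your observation that the lower triangle of $R^{-1}$ is positive is also correct, and the sketch via trailing principal minors is essentially right. But the proof is not finished: you explicitly leave the propagation step open (``should let one propagate\ldots''), and neither the induction on $d$ nor the sign information on $R^{-1}$ is carried to a conclusion. The obstacle you name --- that \eq{tight-c3} links each $D$ both to $D\setminus\{j\}$ and to $D\cup\{j\}$ --- is genuine, and your proposed peeling from the \emph{largest} index does not help: for $D=\{1,\ldots,d-1\}$ the external variable $r_D^{(d)}$ can enter \emph{every} row $i\in D$ (because $R_{i,d}$ is unrestricted), so it cannot be isolated. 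The induction route has the same problem: restricting to subsets of $\{2,\ldots,d\}$ still leaves the term $R_{2,1}b_1 r_D^{(1)}$ in the $i=2$ equation whenever $2\in D$, so the reduced data is not a clean tight system for $R_{\{2,\ldots,d\}}$.

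The paper bypasses the full subset lattice by working only along the single chain $D_n=\{n+1,\ldots,d\}$, peeling from the \emph{smallest} index, and runs a contradiction. Write $r_D^{(j)}:=x_D^{(j)}-x_D$. Applying \eq{tight-c3} to $D_{n-1}$ with $j=n$ gives $x_{D_n}^{(n)}=x_{D_{n-1}}^{(n)}$, so monotonicity yields $r_{D_n}^{(n)}\le r_{D_{n-1}}^{(n)}$; in particular $r_{D_1}^{(1)}\le r_{\sr{N}_d}^{(1)}=0$. For each $D_n$ the Hessenberg shape makes $r_{D_n}^{(n)}$ the \emph{only} external unknown in the block of equations \eq{tight-c1} with $i\in D_n$, and it appears only in the first of them (row $i=n+1$), so the system reads $R_{D_n,D_n}\,(r_{D_n}^{(n+1)},\ldots,r_{D_n}^{(d)})'=-R_{n+1,n}\,r_{D_n}^{(n)}\,e^{(1)}$. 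Since the inverse of a $\sr{P}$-matrix is again a $\sr{P}$-matrix and hence has positive diagonal, $[R_{D_n,D_n}^{-1}]_{1,1}>0$, and therefore $r_{D_n}^{(n+1)}$ has the same sign as $r_{D_n}^{(n)}$. Thus $r_{D_1}^{(1)}<0$ would cascade down the chain to $r_{D_{d-1}}^{(d)}<0$; but $r_{D_{d-1}}^{(d)}=x_{\{d\}}^{(d)}-x_{\{d\}}=1-x_{\{d\}}\ge 0$, a contradiction. Hence $r_{D_1}^{(1)}=0$, which forces all $r_{D_1}^{(j)}=0$; rerunning the same cascade from $D_2,D_3,\ldots$ gives $x_{\{d\}}=1$, and climbing back up the chain yields $x_{\sr{N}_d}=1$. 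The idea missing from your proposal is precisely this restriction to the chain $D_n$: along it the Hessenberg structure confines the external variable to a single row, turning the two-sided coupling you worried about into a one-step recursion.
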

As an illustration for \thm{2},  a $4\times 4$ $\sr{P}$-matrix of the form
\begin{align*}
  R =
  \begin{pmatrix}
    + & \times & \times & \times\\
    - & + & \times & \times\\
    0 & - & + & \times\\
    0 & 0 & - & +
  \end{pmatrix}
\end{align*}
is always a tight matrix. Note that the \thm{2} is a special case of \thm{1}
for $d=2$. \thm{2} will be used in \sectn{application} to prove the tightness of a reentrant
line with the LBFS discipline. We also note that both theorems only
concern with signs of the entries of $R$, and therefore the values of
positive components of $b$ have no role in verifying $(R,b)$ to be a tight
system.

\section{Applications to multiclass networks}
\label{sec:application}
\setnewcounter

Theorems \thmt{1} and \thmt{2} are applicable to the reflecting matrix $R$ of any SRBM, but there may be difficulty in their applications because the reflecting matrix $R$ may not be obtained in tractable form for the SRBM arising from  a queueing network. This is particularly the case for a multiclass queueing network. We attack this problem for multiclass networks with SBP service disciplines. We first introduce this network, and derives new expression for $R$, then focus on a reentrant line as its special case.

\subsection{Multiclass networks with SBP disciplines}
\label{sec:general}

Multiclass queueing networks have been extensively studied in
literature. See, for example, \cite{BravDaiMiya2025}. Here we give a
brief introduction, presenting the minimal parameters and notations
that are necessary for this paper.  Consider a multiclass queueing
network that has $d$ single-server stations and $K$ customer classes for
positive integers $d$ and $K > d$. Denote the sets of all stations and
all customer classes respectively by
\begin{align*}
  S =\{1,2,\ldots,d\}, \qquad \sr{K} = \{1, 2, \ldots , K\}.
\end{align*}
A customer arrives from outside of the network, and visits a finite number
of stations to get service sequentially, then leaves the network. Each
customer at a station is uniquely assigned a customer class. Denote the
station at which class $k \in \sr{K}$ customer belongs to by $s(k)\in S$. For
station $i \in S$, define $\sr{C}_{i} = \{k \in \sr{K}; s(k) = i\}$,
which is the set of all customer classes served at station $i$.

Since each station has a single server and may have customers of multiple classes, we need a service discipline that
dictates the order at which customers are processed. For this, we take a static buffer priority (SBP) service discipline. Under this service discipline, each customer class has its own unlimited buffer at the station for holding waiting customers, and is assigned a unique priority. Each customer is served in preemptive resume manner according to its priority assigned through its class.

\begin{definition}[Priority]
  \label{dfn:priority-class}
  \emph{An SBP service discipline $p$} is a one-to-one mapping
  $p: \sr{K}\to \sr{K}$. For each class $k\in \sr{K}$, $p(k)$ is the
  priority level of class $k$. At each station $i\in S$, class $k\in \sr{C}_i$
  has a higher priority than class $k'\in \sr{C}_i$ if and only if
  $p(k) < p(k')$.
\end{definition}

We next introduce some sets of customer classes according to their priorities.
\begin{definition}[Classes $\sr{L}$ and $\sr{H}$]
\label{dfn:class-L}
For each station $i\in S$, we use $\ell(i)\in \sr{C}_i$ to denote the lowest priority class at the station.
By re-labeling station if necessary, we assume $\ell(i) < \ell(j)$  for $i < j$.
Define $\sr{L} = \{\ell(1), \ell(2),\ldots , \ell(d)\} \subset \sr{K}$.
$\sr{L}$ is called the set of the low priority classes. Define $  \sr{H} = \sr{K} \setminus \sr{L}$,
which is called the set of high priority classes. 
\end{definition}

Let $\lambda_{k} \ge 0$ be the arrival rate of class-$k$ customers from the outside, and denote the vector whose $k$-th entry is $\lambda_{k}$ by $\lambda$. When class $k$ customer completes its service at station $s(k)$, the customer either changes its class from $k$ to some class $k' \in \sr{K}$ with probability $P_{k,k'}$ independently of everything else or leaves the network. The $K \times K$ matrix $P \equiv \{P_{k.k'}; k.k' \in \sr{K}\}$ is called a routing probability matrix. It is assumed that $I - P$ is invertible. This is equivalent that each customer is assumed to eventually leave the network. Let $m_{k}$ be the mean service time for class $k$ customers. Let $M$ be the $K \times K$ diagonal matrix whose $i$-th diagonal entry is $m_{i}$, namely,
\begin{align*}
  M = {\rm diag}(m_{1}, m_{2}, \ldots , m_{K}).
\end{align*}

We now define key matrices for the multiclass queueing with SBP service discipline.  
Fix a station $i\in S$. For class $k \in \sr{C}_i$, let $H(k)$ be the set of classes at station $i$ whose priorities are at least as high as class $k$. Namely,
\begin{align*}
  H(k) = \{k' \in \sr{C}_{i}: p(k') \le p(k)\}.
\end{align*}
Obviously, $H(\ell(i)) = \sr{C}_{i}$ for each station $i \in S$.  Let
$H_{+}(k) = H(k) \backslash \{k\}$, which is the set of classes at
station $s(k)$ whose priorities are strictly higher than class
$k$. For each $k \in \sr{H}$, define $\mbox{$k+$}$ to be the lowest
class in $H_{+}(k)$.  The set $H_{+}(k)$ is empty if $k$ is the
highest priority class at a station. In the latter case, $\mbox{$k+$}$
is undefined.

Following \cite{BravDaiMiya2025}, we first define the $K\times K$ matrix $A$.
\begin{definition}
\label{dfn:A}
Define $K \times K$ matrix $A$ as
\begin{align}
\label{eq:A}
  A = (I - P^{'})M^{-1}(I - B),
\end{align}
where $P^{'}$ is the transpose of the routing matrix $P$, and $B$ is the $K \times K$ matrix defined by
\begin{align}
\label{eq:B}
  B_{k,k'} = \left\{
\begin{array}{ll}
 1, \;\; &  k'= \mbox{$k+$}, \\
 0, & \mbox{otherwise},
\end{array}
\right.
\end{align}
\end{definition}
In \cite{BravDaiMiya2025}, the authors define matrices $A_{L}$, $A_{LH}$, $A_{HL}$ and $A_{H}$ as
\begin{mylist}{3}
\item [(i)] $A_{L}$ and $A_{H}$ are the principal submatrices corresponding the index set $\mathcal{L}\subset \sr{K}$ and
$\mathcal{H}\subset \sr{K}$, respectively.
\item [(ii)] $A_{LH}$ and $A_{HL}$ are corresponding other blocks of $A$.
\end{mylist}
Thus, if the indexes of customer classes are appropriately chosen, then we can write $A$ as
\begin{align}\label{eq:A-b}
  A =
  \begin{pmatrix}
    A_{L} & A_{LH} \\
    A_{HL} & A_{H}
  \end{pmatrix}.
\end{align}
  Assume the matrix $A_{H}$ is invertible. The reflection matrix  $\tilde{R}$, as an $\sr{L} \times \sr{L}$ indexed matrix, is given by 
  \begin{align}
  \label{eq:R}
  \tilde{R} = A_{L} - A_{LH} A_{H}^{-1} A_{HL}.
  \end{align}
  Recall the definition of $\ell: S\to \sr{K}$, where $\ell(i)$ is the
  lowest priority class at station $i$. This map is a one-to-one map
  from $S\to \sr{L}\subset\sr{K}$. It is convenient to work with a
  $S\times S$ indexed reflection matrix $R$. Their relationship is
  given by $R_{i,j}=\tilde{R}_{\ell(i), \ell(j)}$ for $i,j\in S$, and $R$ is defined only when $A_{H}$ is invertible.
  The main result  of this section is to give an explicit formula of $R$  in the following proposition. For this, denote the $(k',k'')$ entry of $W \equiv (I-P')^{-1}$ by $w_{k',k''}$, where all the diagonal entries of $W$ are positive because $W' = (I - P)^{-1} = \sum_{n=0}^{\infty} P^{n}$.

  \begin{proposition}
    \label{pro:R}
    Define $Q \equiv \{Q_{i,j}; i,j \in S\}$ as
    \begin{align}\label{eq:Q1}
      Q_{i,j} = \sum_{k \in \sr{C}_{i}} m_{k} w_{k,\ell(j)}, \quad i, j \in S,
    \end{align}
    Then, $A_H$ is invertible if and only if $Q$ is invertible, and the $S \times S$-indexed reflection matrix $R$ is given by $Q^{-1}$ when either $A_{H}$ or $Q$ is invertible.
  \end{proposition}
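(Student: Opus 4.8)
The plan is to exploit that $A$ is itself invertible, so that $\tilde R$, being the Schur complement of $A_{H}$ in $A$, is nothing but a principal block of $A^{-1}$, and then to identify $Q$ with that block. First I would record that $A = (I-P')M^{-1}(I-B)$ is a product of three invertible matrices: $I-P'$ is invertible by hypothesis, $M^{-1}$ is invertible as a diagonal matrix with positive diagonal (the mean service times), and $I-B$ is invertible because $B$ is nilpotent --- at each station $i$ the classes of $\sr{C}_{i}$ form a finite chain ordered by priority and $B$ sends each class to the one immediately above it, while classes at different stations do not interact under $B$, so $B^{K}=0$. Hence $A^{-1}=(I-B)^{-1}MW$ with $W=(I-P')^{-1}$, and $(I-B)^{-1}=\sum_{n\ge 0}B^{n}$ has the explicit entries $[(I-B)^{-1}]_{k,k'}=1(k'\in H(k))$, because iterating $B$ from $k$ runs up the priority chain of station $s(k)$ through precisely the classes of $H(k)$.

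Next I would extract the $\sr{L}\times\sr{L}$ principal block of $A^{-1}$. Since $M$ is diagonal, $[MW]_{k',k''}=m_{k'}w_{k',k''}$, so for $k,k''\in\sr{K}$
\[
  [A^{-1}]_{k,k''}=\sum_{k'}1(k'\in H(k))\,m_{k'}\,w_{k',k''}=\sum_{k'\in H(k)}m_{k'}w_{k',k''}.
\]
Taking $k=\ell(i)$, for which $H(\ell(i))=\sr{C}_{i}$, and $k''=\ell(j)$ gives $[A^{-1}]_{\ell(i),\ell(j)}=\sum_{k\in\sr{C}_{i}}m_{k}w_{k,\ell(j)}=Q_{i,j}$ by \eq{Q1}. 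Thus, under the bijection $\ell\colon S\to\sr{L}$, the matrix $Q$ is exactly the $\sr{L}\times\sr{L}$ principal block of $A^{-1}$.

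It then remains to invoke the standard block-matrix identities for the invertible matrix $A$ partitioned as in \eq{A-b}. Jacobi's complementary-minor formula gives $\det\big([A^{-1}]_{\sr{L},\sr{L}}\big)=\det(A_{H})/\det(A)$; since $\det(A)\ne 0$, the block $[A^{-1}]_{\sr{L},\sr{L}}$, equivalently $Q$, is invertible if and only if $A_{H}$ is, which is the first assertion. When $A_{H}$ is invertible, the block-inverse formula gives $[A^{-1}]_{\sr{L},\sr{L}}=(A_{L}-A_{LH}A_{H}^{-1}A_{HL})^{-1}=\tilde R^{-1}$ by \eq{R}; combining this with $Q=[A^{-1}]_{\sr{L},\sr{L}}$, with $R_{i,j}=\tilde R_{\ell(i),\ell(j)}$, and with the fact that relabelling of indices commutes with inversion yields $R=Q^{-1}$. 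Since invertibility of $Q$ already forces $A_{H}$ to be invertible (so that $R$ is defined in the first place), the ``either \dots\ or'' clause of the statement needs no separate argument.

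I do not anticipate a genuine obstacle here. The one point to treat with a little care is the invertibility equivalence, which I would derive from Jacobi's determinant identity rather than from $\det A=\det A_{H}\det\tilde R$, since $\tilde R$ is only defined once $A_{H}$ is known to be invertible. A secondary, purely routine point is confirming the closed form of $(I-B)^{-1}$, which rests solely on the nilpotency of $B$ and the definition of $H(\cdot)$.
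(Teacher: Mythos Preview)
Your proof is correct and follows essentially the same route as the paper: compute $(I-B)^{-1}$ explicitly, deduce the closed form of $A^{-1}$, identify $Q$ with the $\sr{L}\times\sr{L}$ block of $A^{-1}$, and then invoke Schur-complement/block-inverse identities. The one noteworthy difference is how the invertibility equivalence is handled. The paper argues the two directions separately: for $A_H$ invertible $\Rightarrow$ $Q$ invertible it appeals to the block-inverse formula (after first observing that $A_L$ is invertible so that the other Schur complement $S_H$ makes sense), while for $Q$ invertible $\Rightarrow$ $A_H$ invertible it performs an ad hoc block computation with an unknown inverse $X$. Your use of Jacobi's complementary-minor identity $\det\big([A^{-1}]_{\sr{L},\sr{L}}\big)=\det(A_H)/\det(A)$ dispatches both directions at once and is cleaner; it also sidesteps the paper's need to note that $A_L$ is invertible. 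Both arguments rest on the same ingredients, so this is a tactical rather than a structural divergence.
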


To prove \pro{R}, the following lemma plays a key role. It will be proved in \app{proof-i-b}.
\begin{lemma}
\label{lem:I-B}
Let $F_{k,k'} = 1(k' \in H(k))$ for $k,k' \in \sr{K}$, and define matrix $F \equiv \{F_{k,k'}; k, k' \in \sr{K}\}$, then $(I -B)^{-1} = F$.
\end{lemma}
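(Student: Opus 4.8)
The plan is to prove the stronger algebraic identity $(I-B)F = I$. Since $B$ and $F$ are square matrices of the same size indexed by $\sr{K}$, a one-sided inverse is automatically two-sided, so this identity immediately gives $(I-B)^{-1} = F$.

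First I would reduce to a single station. Both $B$ and $F$ couple only classes belonging to the \emph{same} station: since $H(k) \subseteq \sr{C}_{s(k)}$ we have $F_{k,k'} = 0$ whenever $s(k') \neq s(k)$, and since $k+ \in H_{+}(k) \subseteq \sr{C}_{s(k)}$ we likewise have $B_{k,k'} = 0$ whenever $s(k') \neq s(k)$. Hence, after grouping the index set according to the partition $\sr{K} = \bigcup_{i \in S} \sr{C}_{i}$, the matrices $B$, $F$, and $I$ are all block diagonal with one block per station. It therefore suffices to verify $(I-B)F = I$ block by block, i.e. on a single station $i$.

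Fix station $i$ and list its classes in increasing order of priority level as $c_{1}, c_{2}, \ldots, c_{m}$, so that $c_{1}$ is the highest-priority class and $c_{m} = \ell(i)$ the lowest. Unwinding the definitions on this block: $H(c_{j}) = \{c_{1}, \ldots, c_{j}\}$, so $F_{c_{j}, c_{j'}} = 1(j' \le j)$ is the lower-triangular all-ones matrix; and $H_{+}(c_{j}) = \{c_{1}, \ldots, c_{j-1}\}$, whose lowest-priority member is $c_{j-1}$, so $c_{j}+ = c_{j-1}$ for $j \ge 2$ while $c_{1}+$ is undefined. Thus $B_{c_{j}, c_{j-1}} = 1$ for $j \ge 2$ and all other entries of this block of $B$ vanish; in other words $B$ is the one-step shift $c_{j} \mapsto c_{j-1}$ up the priority chain. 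I would then compute the product directly: for $j \ge 2$, $(BF)_{c_{j}, c_{j'}} = F_{c_{j-1}, c_{j'}} = 1(j' \le j-1)$, while $(BF)_{c_{1}, c_{j'}} = 0$. Subtracting, $((I-B)F)_{c_{j}, c_{j'}} = 1(j' \le j) - 1(j' \le j-1) = 1(j' = j)$ for $j \ge 2$, and $((I-B)F)_{c_{1}, c_{j'}} = 1(j' \le 1) = 1(j' = 1)$; in every case the entry equals $\delta_{j,j'}$. Hence this block of $(I-B)F$ is the identity, and reassembling the blocks yields $(I-B)F = I$ on all of $\sr{K}$.

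There is no genuine analytic difficulty here; the whole argument is bookkeeping. The only points requiring care are translating the combinatorial definitions faithfully — in particular that $k+$ is the lowest-priority (not highest-priority) class strictly above $k$, so that $B$ acts as a single upward shift rather than a long jump — and confirming the block-diagonal reduction so that inter-station entries genuinely play no role. As a consistency check and an alternative route, one may note that $B$ is nilpotent on each station block, so $(I-B)^{-1} = \sum_{n \ge 0} B^{n}$; since $B^{n}$ records moving exactly $n$ steps up the priority chain, the $(k,k')$ entry of $\sum_{n \ge 0} B^{n}$ is $1$ precisely when $k'$ is reachable from $k$ by some number of upward steps, i.e. when $k' \in H(k)$, which is exactly $F_{k,k'}$.
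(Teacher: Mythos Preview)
Your proof is correct. The paper proves the companion identity $F(I-B)=I$ by a direct indicator-function computation over all of $\sr{K}$, rewriting $B_{j,k'}=1(k'=j+)$ as $1(H(k')=H(j)\setminus\{j\})$ and then checking the diagonal and off-diagonal cases by hand. Your argument instead first exploits the block-diagonal structure of $B$ and $F$ along the partition $\sr{K}=\bigcup_{i}\sr{C}_i$, then orders each block by priority so that $F$ becomes the lower-triangular all-ones matrix and $I-B$ the bidiagonal matrix with $1$'s on the diagonal and $-1$'s on the subdiagonal; the product is then immediate. The two routes are equally elementary, but yours makes the underlying structure (a single-step shift up the priority chain, telescoping against the cumulative indicator) more transparent, and your nilpotency remark $(I-B)^{-1}=\sum_{n\ge 0}B^{n}$ gives an independent sanity check that the paper does not mention.
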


As a corollary to \lem{I-B}, we will give an expression for $A^{-1}$ by using $A^{-1} = (I - B)^{-1} M (I - P^{'})^{-1}$. 
\begin{corollary}
  \label{cor:A-inv}
The matrix  $A$ is invertible and its inverse is given by
\begin{align}
\label{eq:A-inv}
 [A^{-1}]_{k',k''} & = \sum_{k \in \sr{K}} 1(k \in H(k')) m_{k} w_{k,k''}  \qquad k', k'' \in \sr{K}.
\end{align}
\end{corollary}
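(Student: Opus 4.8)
The plan is to obtain $A^{-1}$ directly from the factorization $A = (I-P')M^{-1}(I-B)$ in \eq{A} by inverting the three factors in turn, and then to insert the closed form $(I-B)^{-1}=F$ supplied by \lem{I-B}.

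First I would check that $A$ is invertible. In \eq{A} the factor $I-P'$ is invertible since $I-P$ is invertible by assumption and a matrix is invertible exactly when its transpose is; the factor $M^{-1}=\diag(m_1^{-1},\ldots,m_K^{-1})$ is invertible because the mean service times $m_k$ are strictly positive; and $I-B$ is invertible with $(I-B)^{-1}=F$ by \lem{I-B}. Hence $A$ is a product of invertible matrices and is therefore invertible, and inverting the product in reverse order gives
\begin{align*}
  A^{-1} = (I-B)^{-1}\,M\,(I-P')^{-1} = F\,M\,W ,
\end{align*}
where $W=(I-P')^{-1}$ is the matrix with entries $w_{k',k''}$.

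It then remains only to read off a generic entry of $FMW$. Since $M$ is diagonal the middle summation collapses, so that, using $F_{k',k}=1(k\in H(k'))$,
\begin{align*}
  [A^{-1}]_{k',k''} = \sum_{k\in\sr{K}} F_{k',k}\, m_k\, w_{k,k''} = \sum_{k\in\sr{K}} 1(k\in H(k'))\, m_k\, w_{k,k''},
\end{align*}
which is precisely \eq{A-inv}. Once \lem{I-B} is in hand there is no real obstacle in this argument; the only points that deserve a moment's care are keeping the order of the factors straight when inverting the product — in particular noting that it is $M$, not $M^{-1}$, that appears in $A^{-1}$ — and the routine observation that transposition preserves invertibility.
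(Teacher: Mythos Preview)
Your proof is correct and follows exactly the approach the paper indicates: the paper states the corollary immediately after \lem{I-B} with the remark that $A^{-1} = (I - B)^{-1} M (I - P^{'})^{-1}$, and your proposal simply fills in the routine details of that computation.
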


\begin{proof}[Proof of \pro{R}]
  Consider the decomposition of $A$ in \eq{A-b}. Since $B_{KL}=0$,
  $A_L = (I-P')_L M_L^{-1}$ is invertible. Therefore Schur's
  complement $S_H=A_H-A_{HL}A_L^{-1}A_{LH}$ is well defined. Assume $A_H$ is invertible. The Schur's complement
  $S_L=A_L-A_{LH} A_H^{-1}A_{HL}$ is also well defined. Hence, assuming $A_H$ is invertible,
  by Schur's theorem, both $S_L$ and $S_H$ are invertible and
  \begin{align}\label{eq:A-inv2}
    \begin{pmatrix}
      A_{L} & A_{LH}\\
      A_{HL} & A_{H}
    \end{pmatrix}^{-1} = 
    \begin{pmatrix}
      S_{L}^{-1} & - S_{{L}}^{-1}A_{LH}A_{H}^{-1}\\
      -S_{{H}}^{-1}A_{HL}A_{L}^{-1} & S_{{H}}^{-1}.
    \end{pmatrix}.
  \end{align}
  Comparing \eq{A-inv} with \eq{A-inv2}, we see that
  \begin{align*}
    [ S_L^{-1}]_{\ell(i), \ell(j)} =  \sum_{k \in \sr{K}} 1(k \in H(\ell(i))) m_{k} w_{k,\ell(j)}=\sum_{k\in \sr{C}_i} m_k w_{k,\ell(j)} = Q_{i,j}, \qquad i,j \in S.
  \end{align*}
  Hence, $Q$ is invertible, and $R = Q^{-1}$ because $R_{i,j} = [S_{L}]_{\ell(i),\ell(j)} = [Q^{-1}]_{i,j}$ for $i,j \in S$. Conversely, assume $Q$ is invertible. Then $[A^{-1}]_{\ell(i),\ell(j)} = Q_{i,j}$ by \eq{A-inv}, and hence $[A^{-1}]_{L}$ is invertible. 
Since $A$ is invertible, there exist sub-block matrices $X_{L}, X_{LH}, X_{H,L}, X_{H}$  such that
  \begin{align*}
    \begin{pmatrix}
      A_{L} & A_{LH}\\
      A_{HL} & A_{H}
    \end{pmatrix}  
    \begin{pmatrix}
     X_{L} & X_{LH}\\
      X_{HL} & X_{H}
    \end{pmatrix}
    = \begin{pmatrix}
    I_{H} & 0_{LH}\\
    0_{HL} & I_{L}
    \end{pmatrix},
  \end{align*}
  where the subscripts $L, LH, HL, H$ specify sub-blocks of $K \times K$ matrices. This implies
\begin{align*}
  A_{HL} X_{L} + A_{H} X_{HL} = 0_{HL}, \qquad A_{HL} X_{LH} + A_{H} X_{H} = I_{H}.
\end{align*}
Since $X_{L} = [A^{-1}]_{L}$, $X_{L}$ is invertible, so we have $A_{HL} = - A_{H} X_{HL} X_{L}^{-1}$ from the first equation. Substituting this $A_{HL}$ into the second equation, we have
\begin{align*}
  A_{H} ( - X_{HL} X_{L}^{-1} X_{LH} + X_{H}) = I_{H}.
\end{align*}
This proves that $A_{H}$ is invertible.
\end{proof}

\begin{corollary}
\label{cor:2-station}
For any 2-station $K$-class network with SBP discipline, the reflecting matrix $R$ is defined and an $\sr{M}$-matrix (therefore tight and completely-$\sr{S}$ by \thm{1} and also by \thm{2}) if $Q$ of \eq{Q1} has a positive determinant. In particular, if $R$ is defined and completely-$\sr{S}$, then $R$ is tight.
\end{corollary}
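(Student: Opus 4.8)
The plan is to combine \pro{R} with the $d=2$ classification in \thm{1}. By \pro{R}, for a two-station network the reflection matrix is $R=Q^{-1}$ as an $S\times S$ (hence $2\times 2$) matrix, defined exactly when $Q$ is invertible, where $Q_{i,j}=\sum_{k\in\sr{C}_i} m_k w_{k,\ell(j)}$. The first step is to record the sign pattern of $Q$: as noted just before \pro{R}, $W'=(I-P)^{-1}=\sum_{n\ge 0}P^{n}$ has nonnegative entries and strictly positive diagonal, hence so does $W=(W')'$; the mean service times $m_k$ are positive; therefore $Q_{i,j}\ge 0$ for all $i,j$, and, since $\ell(i)\in\sr{C}_i$, we have $Q_{i,i}\ge m_{\ell(i)}w_{\ell(i),\ell(i)}>0$. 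Thus $Q$ is a $2\times 2$ nonnegative matrix with strictly positive diagonal.

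Next I would read off $R=Q^{-1}$ from the $2\times 2$ inverse formula:
\begin{align*}
 R_{1,1}&=\frac{Q_{2,2}}{\det Q},\quad R_{2,2}=\frac{Q_{1,1}}{\det Q},\quad R_{1,2}=-\frac{Q_{1,2}}{\det Q},\\
 R_{2,1}&=-\frac{Q_{2,1}}{\det Q},\quad \det R=R_{1,1}R_{2,2}-R_{1,2}R_{2,1}=\frac{1}{\det Q}.
\end{align*}
From the sign pattern of $Q$, the off-diagonal entries of $R$ are never positive, while $R_{1,1}$, $R_{2,2}$, and $\det R$ all carry the sign of $\det Q$. Hence if $\det Q>0$ then $R$ satisfies (\sect{main}.a) and (\sect{main}.b), which, as noted right after \thm{1}, is precisely the statement that $R$ is an $\sr{M}$-matrix; by \thm{1} — or by the fact recalled earlier that every $\sr{M}$-matrix is tight, together with the inclusion of $\sr{M}$-matrices in the completely-$\sr{S}$ class — $R$ is then tight and completely-$\sr{S}$. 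Since $\det Q>0$ also makes $Q$ (hence $R$) well defined, this proves the first assertion.

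For the second assertion, suppose $R$ is defined and completely-$\sr{S}$; then $\det Q\ne 0$. If $\det Q<0$, then $R_{1,1}=Q_{2,2}/\det Q<0$, so the $1\times 1$ principal submatrix $(R_{1,1})$ admits no positive scalar $x$ with $R_{1,1}x>0$, contradicting that $R$ is completely-$\sr{S}$. Therefore $\det Q>0$, and the previous paragraph gives that $R$ is an $\sr{M}$-matrix, hence tight. (Equivalently, the sign constraints $R_{1,2}\le 0$, $R_{2,1}\le 0$ leave only cases (\sect{main}.b) and (\sect{main}.e) of \thm{1}, and (\sect{main}.e) is excluded because $R$ is completely-$\sr{S}$, leaving (\sect{main}.b).)

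I do not expect a genuine obstacle. The only step deserving care is pinning down the sign structure of $Q$ — that $W=(I-P')^{-1}$ is entrywise nonnegative with a strictly positive diagonal (immediate from the identity $W'=\sum_{n\ge0}P^{n}$ stated before \pro{R}) and that $\ell(i)$ genuinely lies in $\sr{C}_i$ — after which the argument is only the $2\times 2$ inverse formula and an appeal to \thm{1}.
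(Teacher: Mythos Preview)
Your proof is correct and follows essentially the same approach as the paper: both establish that $Q$ is a nonnegative $2\times 2$ matrix with positive diagonal, deduce that $R=Q^{-1}$ is an $\sr{M}$-matrix when $\det Q>0$, and for the converse show that $\det Q<0$ would force negative diagonal entries in $R$, contradicting completely-$\sr{S}$. Your version is slightly more explicit (writing out the $2\times 2$ inverse formula and the inequality $Q_{i,i}\ge m_{\ell(i)}w_{\ell(i),\ell(i)}>0$), but the logic is the same.
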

\begin{proof}
  Since it is assumed that the determinant $|Q| > 0$, $Q$ and so
  $A_{H}$ are invertible and $R = Q^{-1}$ by \pro{R}. Since $Q$ is a
  nonnegative matrix with positive diagonal entries by \eq{Q1} and has
  a positive determinant, $R=Q^{-1}$ is an $\sr{M}$-matrix for
  $d=2$. For the last claim, assume $R$ is defined and
  completely-$\sr{S}$, then we only need to show that $|Q|>0$. Since
  $R$ is defined, $A_{H}$ is invertible, so $Q$ is invertible and
  $R^{-1} = Q$ is a nonnegative matrix. From this and for $R$ to be
  completely-$\sr{S}$, we have $|Q|>0$ for $d=2$. Otherwise,
  $R=Q^{-1}$ has negative diagonal entries, which contradicts $R$ being
  completely-$\sr{S}$.
\end{proof}

\subsection{Reentrant lines with LBFS discipline}
\label{sec:reentrant_LBFS}

A reentrant line is a special case of the multiclass network in which all customers follow the same route, where a route is the ordered sequence of customer classes which are assigned at stations. The routing probability matrix $P$ of the reentrant line can be given by
\begin{align}
\label{eq:reentrant-P}
  P_{k,k'} = \left\{
\begin{array}{ll}
 1, \quad & k'=k+1, \; k=1,2,\ldots,K-1,  \\
 0, & \mbox{otherwise}.  
\end{array}
\right.
\end{align}
 From \eq{reentrant-P},
\begin{align}
\label{eq:I-P-1}
W = (I-P^{'})^{-1} = \sum_{n=0}^{K} (P')^{n} = \{1(k \le \ell); k,\ell \in \sr{K}\}.
\end{align}
For $i,j\in S$,
\begin{align}\label{eq:A2}
  \sum_{k \in \sr{C}_{i}} m_{k} w_{k,\ell(j)} = \sum_{k\in \sr{C}_i, k\ge \ell(j)} m_k,
\end{align}
which identical to the expression in (A.2) of
\cite{DaiYehZhou1997}.

\begin{theorem}
\label{thm:LBFS}
   For a reentrant line with LBFS discipline, $A_H$ is invertible and  the $R$ is a tight matrix.
\end{theorem}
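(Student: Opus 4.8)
The plan is to use \pro{R} to write down the reflection matrix explicitly and then invoke \thm{2}. Under the LBFS discipline the lowest‑priority class at each station is the first buffer it visits, so $\ell(i)=\min\sr{C}_i$; since the stations are labelled so that $\ell(1)<\ell(2)<\cdots<\ell(d)$, the minima $\min\sr{C}_i$ are strictly increasing in $i$. Substituting $\ell(j)=\min\sr{C}_j$ into the formula for $Q$ in \pro{R} and using \eqref{eq:A2} gives $Q_{i,j}=\sum_{k\in\sr{C}_i,\ k\ge\ell(j)}m_k$ for $i,j\in S$. From this I would record two facts: (i) if $j\le i$ then $\ell(j)\le\ell(i)=\min\sr{C}_i\le k$ for every $k\in\sr{C}_i$, so $Q_{i,j}=\rho_i:=\sum_{k\in\sr{C}_i}m_k$ is independent of $j$; and (ii) if $i<j$ then $Q_{i,i}-Q_{i,j}=\sum_{k\in\sr{C}_i,\ \ell(i)\le k<\ell(j)}m_k\ge m_{\ell(i)}>0$, because $\ell(i)=\min\sr{C}_i$ itself lies in that index range (the mean service times being positive).

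Next I would show that $Q$ is a $\sr{P}$-matrix. Let $\bar U$ denote the lower‑triangular all‑ones matrix of the relevant size; then $\bar U^{-1}$ is the bidiagonal matrix with $1$ on the diagonal and $-1$ on the first subdiagonal, and $\det\bar U^{-1}=1$. Fix $D=\{i_1<\cdots<i_m\}\subseteq S$. The matrix $Q_D\bar U^{-1}$ has $(a,b)$-entry equal to $Q_{i_a,i_b}-Q_{i_a,i_{b+1}}$ for $b<m$ and $Q_{i_a,i_m}$ for $b=m$. By (i) this is $0$ whenever $b<a$, and its diagonal entries are $Q_{i_a,i_a}-Q_{i_a,i_{a+1}}>0$ for $a<m$ and $Q_{i_m,i_m}=\rho_{i_m}>0$ for $a=m$, using (ii) and (i). Hence $Q_D\bar U^{-1}$ is upper triangular with strictly positive diagonal, so $\det Q_D=\det(Q_D\bar U^{-1})>0$. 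Since this holds for every $D\subseteq S$, $Q$ is a $\sr{P}$-matrix; in particular $Q$ is invertible, so by \pro{R} the matrix $A_H$ is invertible and $R=Q^{-1}$, and $R$ is a $\sr{P}$-matrix because the inverse of a $\sr{P}$-matrix is a $\sr{P}$-matrix.

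It remains to verify the sign pattern \eqref{eq:R1}. Put $N:=Q\bar U^{-1}$ (taking $D=S$ above), which is upper triangular with strictly positive diagonal entries $N_{i,i}$. Since $Q=N\bar U$, we have $R=Q^{-1}=\bar U^{-1}N^{-1}$, and because $N^{-1}$ is upper triangular while $\bar U^{-1}$ is lower bidiagonal, $R_{i,j}=[N^{-1}]_{i,j}-[N^{-1}]_{i-1,j}$ with the convention $[N^{-1}]_{0,\cdot}:=0$. For $j\le i-2$ both terms vanish, so $R_{i,j}=0$; for $j=i-1$ we get $R_{i,i-1}=-[N^{-1}]_{i-1,i-1}=-1/N_{i-1,i-1}<0$; and the diagonal entries $R_{i,i}$ are positive since $R$ is a $\sr{P}$-matrix. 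Thus $R$ is a $\sr{P}$-matrix satisfying \eqref{eq:R1}, and \thm{2} implies that $R$ is tight, completing the proof.

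The whole argument rests on fact (ii), the strict inequality $Q_{i,i}-Q_{i,j}\ge m_{\ell(i)}>0$ for $i<j$: this is what makes every principal minor of $Q$ (and hence of $R$) positive and what drives the reduction of $Q_D$ to an upper‑triangular matrix by column operations. I expect the only step requiring genuine care is the verification that this structure is inherited by all principal submatrices $Q_D$; the factorization $Q=N\bar U$ does not restrict to a submatrix, so one must re‑run the adjacent‑column subtraction inside $Q_D$, which is legitimate precisely because the ``constant up to the diagonal'' shape of $Q$ survives passing to $Q_D$ after relabelling.
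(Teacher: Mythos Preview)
Your proof is correct and follows essentially the same route as the paper: compute $Q$ from \pro{R}, show it is a $\sr{P}$-matrix, deduce $R=Q^{-1}$ is a $\sr{P}$-matrix with the sign pattern \eqref{eq:R1}, and invoke \thm{2}. The only difference is that the paper outsources the verification that $Q$ is a $\sr{P}$-matrix and that $R$ has the required Hessenberg structure to \cite{DaiYehZhou1997}, whereas you supply these arguments directly via the factorization $Q=N\bar U$; your self-contained treatment is arguably preferable here.
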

\begin{proof}
 It follow from the proof of Theorem 3.1 (Part
b) in \cite{DaiYehZhou1997} that under LBFS the matrix $Q \equiv \{\sum_{k\in \sr{C}_i, k\ge \ell(j)} m_k; i,j \in S\}$ is a
$\sr{P}$-matrix, and therefore it is invertible, and $Q^{-1}$ is also a
$\sr{P}$-matrix by \cite{BermPlem1994}. Hence, by \pro{R},
$R=Q^{-1}$ is a $\sr{P}$-matrix. By the proof of Theorem 3.2 (Part b) in \cite{DaiYehZhou1997} that $R=\diag(a_{11}, \ldots, a_{dd})^{-1} R_1$, where $R_1$ is given as the matrix in the display below (A.12) of \cite{DaiYehZhou1997}. Note that the structure of the matrix $R_1$ 
fits the one in \thm{2}. Therefore, \thm{LBFS} follows from \thm{2}.
\end{proof}

\subsection{A reentrant line example with FBFS discipline}
\label{sec:reentrant_FBFS}

\begin{example}[Reentrant lines]
\label{exa:reentrant}

Consider the $7$-class and $3$-station reentrant line. Assume that $\sr{C}_{1} = \{1,2\}$, $\sr{C}_{2} = \{3,5\}$ and $\sr{C}_{3} = \{4,6,7\}$. We denote a customer arriving at station $i$ as class $k$ by $\os{k}{\to} [i]$. We assume that $\lambda_{1} = 1/3$, $m_1 = 2$, $m_2=1$, $m_3=2$, and $m_4=m_5=m_6=m_7=1$, and take the following route.
\begin{align}
\label{eq:ex-routing1}
 \mbox{In} \os{1}{\to} [1]  \os{2}{\to} [1] \os{3}{\to} [2] \os{4}{\to} [3] \os{5}{\to} [2] \os{6}{\to} [3] \os{7}{\to} [3] \to \mbox{Out}.
\end{align}
Hence, the following heavy traffic condition is satisfied for any assignment of priority because
\begin{align*}
  &\lambda_1 \sum_{k \in \sr{C}_{1}} m_{k} = \lambda_1(  m_{1} + m_{2}) = 1, \quad  \lambda_1\sum_{k \in \sr{C}_{2}} m_{k} =  \lambda_1(m_{3} + m_{5}) = 1, \quad \\
  & \lambda_1\sum_{k \in \sr{C}_{3}} m_{k} = \lambda_1( m_{4} + m_{6} + m_{7}) = 1.
\end{align*}

Under the FBFS discipline, the lowest priority classes in $\sr{C}_{1}$, $\sr{C}_{2}$ and $\sr{C}_{3}$ are $2$, $5$, $7$. Namely, $\ell(1) = 2$, $\ell(2) = 5$, and $\ell(3) = 7$. Hence, by \eq{A2},
\begin{align}
\label{eq:R-FBFS}
  R^{-1} =
\begin{pmatrix}
 1 & 0 & 0\\
 3 & 1 & 0\\
 3 & 2 & 1
\end{pmatrix}, \qquad
  R =
\begin{pmatrix}
 1 & 0 & 0\\
 -3 & 1 & 0\\
 3 & -2 & 1
\end{pmatrix}.
\end{align}
\end{example}

In \app{counterexample}, we will demonstrate that this $R$ is not tight. However, this does not mean that the stationary distribution and its associated probability distribution do not exist because the tight system condition is sufficient condition for their existence but may not be necessary.

\section{Proof of theorems}
\label{sec:proof-theorem}
\setnewcounter

In this section, we prove the theorems in Section \sect{main}.

\subsection{Proof of \thm{1}}
\label{sec:31}

Obviously, $R_{i,i} > 0$ for $i=1,2$ and $b > 0$ are necessary for $(R,b)$ to be a reflecting system. So, in this proof, we always assume this condition. In all the cases, the condition \eq{tight-c1} can be written as
\begin{align}
\label{eq:tight-ex1}
 & R \left(
\begin{array}{l}
 b_{1} (x^{(1)}_{\{1,2\}} - x_{\{1,2\}}) \\
 b_{2} (x^{(2)}_{\{1,2\}} - x_{\{1,2\}})   
\end{array}
\right) = 0,\\
\label{eq:tight-ex2}
 & R_{11} b_{1} (1 - x_{\{1\}}) + R_{12} b_{2} (x^{(2)}_{\{1\}} - x_{\{1\}}) = 0,\\
\label{eq:tight-ex3}
 & R_{21} b_{1} (x^{(1)}_{\{2\}} - x_{\{2\}}) + R_{22} b_{2} (1 - x_{\{2\}}) = 0,
\end{align}
where $x^{(1)}_{1} = x^{(2)}_{2} = 1$ is used to have \eq{tight-ex2} and \eq{tight-ex3}. Note that $R$ is a completely-$\sr{S}$ matrix for the case (\sect{main}.b) if and only if $R_{11} R_{22} - R_{12} R_{21} > 0$, which is automatically satisfied for the case (\sect{main}.c). Thus, for the cases (\sect{main}.b) and (\sect{main}.c), we always have $R_{11} R_{22} - R_{12} R_{21} > 0$, which implies that $R$ is invertible.

We now separately consider the three cases (\sect{main}.b)--(\sect{main}.d). We first consider the case (\sect{main}.b). Since $R$ is invertible, multiplying $R^{-1}$ to \eq{tight-ex1} from the left yields that $b_{j}(x^{(j)}_{\{1,2\}} - x_{\{1,2\}}) = 0$ for $j=1,2$, so we have
\begin{align}
\label{eq:tight-ex4}
  x^{(1)}_{\{2\}} = x^{(1)}_{\{1,2\}} = x_{\{1,2\}}, \qquad x^{(2)}_{\{1\}} = x^{(2)}_{\{1,2\}} = x_{\{1,2\}}.
\end{align}
If $R_{12}=R_{21}=0$, then $R$ is a diagonal matrix, so $(R,b)$ is a tight system for any $b>0$. Otherwise, $R_{12}<0$ or $R_{21}<0$. Assume that $R_{12}<0$, then \eq{tight-ex2} implies that $x_{\{1,2\}} - x_{\{1\}} \ge 0$ since $1 - x_{\{1\}} \ge 0$. But $x_{\{1,2\}} \le x_{\{1\}}$ by the assumption. Hence, we must have $x_{\{1,2\}} = x_{\{1\}}$, which together with \eq{tight-ex2} implies that $x_{\{1,2\}} = x_{\{1\}} = 1$. Furthermore, $x_{\{2\}} \ge x_{\{1,2\}} = 1$, so $x_{\{2\}} = 1$, which and \eq{tight-ex4} imply that $x^{(1)}_{\{2\}} = x^{(1)}_{\{1,2\}} = x_{\{1,2\}} = 1$ and $x^{(2)}_{\{1\}} = x^{(2)}_{\{1,2\}} = x_{\{1,2\}} = 1$. Clearly, the same results are obtained by assuming that $R_{21} < 0$. Hence, $(R,b)$ is a tight system for any $b > 0$ in the case (\sect{main}.b). We next consider the case (\sect{main}.c). In this case, we first consider the case that $R_{12} < 0$ and $R_{21} < 0$. Then, similarly to the previous case, \eq{tight-ex2} implies that $x^{(2)}_{\{1,2\}} = x^{(2)}_{\{1\}} \ge x_{\{1\}}$, while \eq{tight-ex3} implies that $x^{(1)}_{\{1,2\}} = x^{(1)}_{\{2\}} \ge x_{\{2\}}$. On the other hand, since $R$ is invertible, premultiplying $R^{-1}$ to \eq{tight-ex1} yields
\begin{align*}
 & x^{(1)}_{\{1,2\}} - x_{\{1,2\}} = 0, \qquad x^{(2)}_{\{1,2\}} - x_{\{1,2\}} = 0.
\end{align*}
Hence, $x^{(1)}_{\{1,2\}} = x_{\{1,2\}} $ and $x^{(2)}_{\{1,2\}} = x_{\{1,2\}}$, and therefore $x_{\{2\}} \le x^{(1)}_{\{1,2\}} = x_{\{1,2\}}$ and $x_{\{1\}} \le x^{(2)}_{\{1\}} = x_{\{1,2\}}$. Thus, $1 = x_{\{2\}} = x^{(1)}_{\{1,2\}} = x_{\{1,2\}}$ and $1 = x_{\{1\}} = x^{(2)}_{\{1\}} = x_{\{1,2\}}$ by the arguments in the previous case. Hence, $(R,b)$ is again a tight system for any $b > 0$.

We finally consider the case (\sect{main}.b). We show that $(R,b)$ can not be a tight system for any $b > 0$. To see this, we only need to find the solution $x_{D}, x^{(j)}_{D}$ such that they are in $(0,1]$. Let $\alpha_{1} = \frac {R_{12} b_{2}} {R_{11}b_{1}}$ and $\alpha_{2} = \frac {R_{21} b_{1}} {R_{22}b_{2}}$, then $\alpha_{1}, \alpha_{2} > 0$. Define, for $\varepsilon \in (0,1]$
\begin{align*}
  x_{\{1\}} = \frac {\varepsilon \alpha_{1} +1} {\alpha_{1} + 1}, \quad x_{\{2\}} = \frac {\varepsilon \alpha_{2} + 1} {\alpha_{2} + 1}, \qquad x_{\{1,2\}} = x^{(1)}_{\{2\}} = x^{(2)}_{\{1\}} = \varepsilon.
\end{align*}
Then, it can be checked that all the conditions in \dfn{tight} are satisfied, but $x_{D}, x^{(j)}_{D} \in (0,1]$ for $\emptyset \not= D \subset \{1,2\}$ and $j=1,2$. Hence, $(R,b)$ is not a tight system for any $b > 0$.

\subsection{Proof of \thm{2}}
\label{sec:32}

Because $R$ is a $\sr{P}$-matrix, $(R,b)$ is a reflecting system for $b>0$. Hence, we only need to prove that $(R,b)$ is tight. For this, we can assume $b=1$ without loss of generality because the condition \eq{R1} is unchanged when we replace $R$ by $R \diag (b)^{-1}$. To write the condition \eq{tight-c1} in a simpler form, recall that $\sr{N}_{d} = \{1,2,\ldots,d\}$, and let
\begin{align*}
  r_{D}^{(j)} = x_{D}^{(j)} - x_{D}, \qquad  j\in \sr{N}_{d}, \; D \subset \sr{N}_{d}.
\end{align*}
Then, for $D = \sr{N}_{d}$, \eq{tight-c1} can be written as
\begin{align*}
  R \begin{pmatrix} 
r_{\sr{N}_{d}}^{(1)} \\ 
\vdots \\ 
r_{\sr{N}_{d}}^{(d)}
\end{pmatrix} = 0.
\end{align*}
Since $R$ is a $\sr{P}$-matrix, $R$ is invertible, which implies that
\begin{align}
\label{eq:rJ1}
  r_{\sr{N}_{d}}^{(1)} = r_{\sr{N}_{d}}^{(2)} = r_{\sr{N}_{d}}^{(3)} = \ldots  = r_{\sr{N}_{d}}^{(\sr{N}_{d})} = 0, \; \mbox{ equivalently, } \; x_{\sr{N}_{d}}^{(1)} = x_{\sr{N}_{d}}^{(2)} = x_{\sr{N}_{d}}^{(3)} = \ldots  = x_{\sr{N}_{d}}^{(\sr{N}_{d})} = x_{\sr{N}_{d}}.
\end{align}
For $D = D_{1} \equiv \{2, 3, 4, \ldots , d\}$, it follows from \eq{tight-c1} that
\begin{align}
\label{eq:rD1}
& r_{D_{1}}^{(1)} = x_{D_{1}}^{(1)} - x_{D_{1}} = x_{\sr{N}_{d}}^{(1)} - x_{D_{1}} \leq x_{\sr{N}_{d}}^{(1)} - x_{\sr{N}_{d}} = r_{\sr{N}_{d}}^{(1)} = 0.
\end{align}
Let $e^{(1)}_{D_{1}}$ be the $d-1$-dimensional unit vector whose 1st entry is $1$, and let
\begin{align*}
  R_{D_{1},D_{1}} = \begin{pmatrix}
R_{2,2} & R_{2,3} & \cdots & R_{2,d}\\
R_{3,2} & R_{3,3} & \cdots & R_{3,d}\\
0 & R_{4,3} & \cdots & R_{4,d}\\
\vdots & \vdots & \ddots & \vdots\\
0 & 0 & \cdots & R_{d,d}
\end{pmatrix},
\qquad 
r^{(2,d)}_{D_{1}} = 
\begin{pmatrix}
r_{D_{1}}^{(2)}\\
r_{D_{1}}^{(3)}\\
r_{D_{1}}^{(4)}\\
\vdots\\
r_{D_{1}}^{(d)}
\end{pmatrix},
\end{align*}
then, from \eq{tight-c1}, we have
\begin{align}
\label{eq:rD12}
&  R_{2,1} r_{D_{1}}^{(1)} e^{(1)}_{D_{1}} + R_{D_{1},D_{1}} r^{(2,d)}_{D_{1}} = 0.
\end{align}
Multiplying the above equation by the row vector
$(r^{(2,d)}_{D_{1}})' \equiv \left(r_{D_{1}}^{(2)} r_{D_{1}}^{(3)} \cdots r_{D_{1}}^{(d)}\right)$
from the left, we have
\begin{align}
\label{eq:rD13}
  R_{2,1}r_{D_{1}}^{(1)} r_{D_{1}}^{(2)} + r^{(2,d)}_{D_{1}} R_{D_{1},D_{1}} (r^{(2,d)}_{D_{1}})' = 0.
\end{align}
Since $r_{D_{1}}^{(1)} \le 0$ by \eq{rD1}, if $r_{D_{1}}^{(1)} \not= 0$, then $r^{(2)}_{D_{1}} < 0$ by \eq{rD13} because $R_{2,1} r_{D_{1}}^{(1)} >  0$ by \eq{R1} and $r^{(2,d)}_{D_{1}} R_{D_{1},D_{1}} (r^{(2,d)}_{D_{1}})' > 0$ which follows from the fact that $R$ is a $\sr{P}$-matrix. Otherwise, $r_{D_{1}}^{(1)} = 0$ and \eq{rD12} imply that
\begin{align}
\label{eq:rD1-0}
  r_{D_{1}}^{(1)} = r_{D_{1}}^{(2)} = \ldots  = r_{D_{1}}^{(d)} = 0.
\end{align}
Let $D_{n} = \{n+1,n+2,\ldots,d\}$ for $n = 2,3,\ldots,d-1$ and $D_{d} = \emptyset$. Assume that $r_{D_{1}}^{(1)} < 0$, then $r_{D_{1}}^{(2)} < 0$, so $r_{D_{2}}^{(2)} = x^{(2)}_{D_{2}} - x_{D_{2}} = x^{(2)}_{D_{1}} - x_{D_{2}} \le x^{(2)}_{D_{1}} - x_{D_{1}} = r^{(2)}_{D_{1}} < 0$.

We apply the above arguments for $r^{(2)}_{D_{2}}$ instead of $r^{(1)}_{D_{1}}$ and the submatrix:
\begin{align*}
 R_{D_{2},D_{2}} \equiv \begin{pmatrix}
R_{3, 3} & R_{3, 4} & \cdots & R_{3, d}\\
R_{4,3} & R_{4, 4} & \cdots & R_{4, d}\\
\vdots & \vdots & \ddots & \vdots\\
0 & 0 &\cdots & R_{d, d}
\end{pmatrix},
\end{align*}
then $r_{D_{2}}^{(2)} < 0$ implies that $r_{D_{2}}^{(3)} < 0$. Repeating this arguments for $n=3, \ldots, d-1$, we have 
\begin{align*}
  r^{(n+1)}_{D_{n}} < 0, \qquad n=1,2,\ldots,d-1.
\end{align*}
This further implies that Since $r^{(d)}_{D_{d-1}} = x^{(d)}_{\emptyset} - x_{\{d\}} = 1 - x_{\{d\}} \ge 0$, this is a contradiction. Hence, similarly to \eq{rD1-0}, we must have
\begin{align}
\label{eq:rDd-0}
  r_{D_{i}}^{(1)} = r_{D_{i}}^{(2)} = \ldots  = r_{D_{i}}^{(d)} = 0, \qquad i=2,3,\ldots,d-1,
\end{align}
which implies that $x_{D_{d-1}} =  x^{(d)}_{D_{d-1}} = x^{(d)}_{\emptyset} = 1$.
This further implies that $x^{(d-1)}_{D_{d-2}} = x^{(d-1)}_{D_{d-1}} = x_{D_{d-1}} = 1$, which implies that $x_{D_{d-2}} = 1$. Thus, inductively, we have $x^{(i)}_{\sr{N}_{d}} = x_{\sr{N}_{d}} = 1$ for $i \in \sr{N}_{d}$ by \eq{rJ1}. This proves \thm{1} because $x^{(i)}_{D} \ge x^{(i)}_{\sr{N}_{d}}$ and $x_{D} \ge x_{\sr{N}_{d}}$ for any $D \subset \sr{N}_{d}$.

\appendix

\setnewcounter
\setcounter{section}{1}

\subsection{Proof of \lem{I-B}}
\label{app:proof-i-b}

\begin{proof}[Proof of \lem{I-B}]
  Define $K \times K$ matrix $G = \{G_{k,k'}; k,k' \in \sr{K}\}$ as $G = F(I - B)$, and let $\delta_{k,k'} = 1(k=k')$. Then, from the definition \eq{B} of $B$, we have
\begin{align*}
  G_{k,k'} = \sum_{j=1}^{K} F_{k,j} (I-B)_{j,k'} = \sum_{j=1}^{K} 1(j \in H(k)) (\delta_{j,k'} - 1(H(k')=H(j) \setminus \{j\})).
\end{align*}
Hence, for $k=k'$, $j \in H(k)$ implies that $1(H(k) = H(k') = H(j) \setminus \{j\})=0$, and therefore $G_{k,k'} = 1$ because $1(j \in H(k)) \delta_{j,k} = 1$ only if $j=k$. On the other hand, for $k \not= k'$, $\sum_{j=1}^{K} 1(j \in H(k)) \delta_{j,k'} = 1(k' \in H(k))$, and
\begin{align*}
 & \sum_{j=1}^{K} 1(j \in H(k)) 1(H(k')=H(j) \setminus \{j\}) \\
 & \quad = 1(k \in H(k)) 1(H(k')=H(k) \setminus \{k\}) + \sum_{j \in \sr{K} \setminus \{k\}} 1(j \in H(k)) 1(H(k')=H(j) \setminus \{j\})\\
 & \quad = 1(H(k')=H(k) \setminus \{k\}) + \sum_{j \in \sr{K} \setminus \{k\}} 1(j \in H(k)) 1(H(k')=H(j) \setminus \{j\})\\
 & \quad = 1(k' \in H(k)),
\end{align*}
which imply that $G_{k,k'} = 0$ for $k \not=k'$. Hence,
 $G$ is an identical matrix, and therefore $F$ is the inverse of $I-B$.

\end{proof}

\subsection{Counterexample for the FBFS case}
\label{app:counterexample}

We consider the reentrant line with FBFS discipline in \exa{reentrant}. For this model, $R$ is obtained by \eq{R-FBFS}.
Then, let $b_{j} = 1$ for $j=1,2,3$, and let
\begin{align*}
 & x_{\{1,2,3\}} = x^{(1)}_{\{1,2,3\}} = x^{(2)}_{\{1,2,3\}} = x^{(3)}_{\{1,2,3\}} = 1/2,\\
 & x_{\{1,2\}} = x^{(1)}_{\{1,2\}} = x^{(2)}_{\{1,2\}} = x^{(3)}_{\{1,2\}} = 1/2,\qquad
  x_{\{1,3\}} = x^{(1)}_{\{1,3\}} = x^{(2)}_{\{1,3\}} = x^{(3)}_{\{1,3\}} = 1/2,\\
 & x_{\{2,3\}} = x^{(1)}_{\{2,3\}} = x^{(2)}_{\{2,3\}} = x^{(3)}_{\{2,3\}} = 1/2,\\
 & x_{\{1\}} = x^{(1)}_{\{1\}} = x^{(2)}_{\{1\}} = 1, \quad x^{(3)}_{\{1\}} = 1/2, \qquad
  x_{\{2\}} = x^{(1)}_{\{2\}} = x^{(2)}_{\{2\}} = 1, \quad x^{(3)}_{\{2\}} = 1/2,\\
 & x_{\{3\}} = 3/4, \quad x^{(1)}_{\{3\}} = x^{(2)}_{\{3\}} = 1/2, \quad x^{(3)}_{\{3\}} = 1,\qquad
  x_{\emptyset} = x^{(1)}_{\emptyset} = x^{(2)}_{\emptyset} = x^{(3)}_{\emptyset} = 1,
\end{align*}
then it is seen that $\{x_{D}; D \subset \sr{N_{d}}\}$ and $\{x^{(i)}_{D}; i=1,2,3, D \subset \sr{N_{d}}\}$ satisfy all the conditions \eq{tight-c1}--\eq{tight-c3} for the tight system, but this set of solutions contradicts \eq{xD-s}. Hence, this $R$ is not tight. 

%\bibliography{../../../texmf/bib/dai20230220}
%\bibliography{../../../dai/texmf/bib/dai20240229} %Dropbox
%\bibliography{dai20240229} %Dropbox

\begin{thebibliography}{19}
\expandafter\ifx\csname natexlab\endcsname\relax\def\natexlab#1{#1}\fi
\expandafter\ifx\csname url\endcsname\relax
  \def\url#1{\texttt{#1}}\fi
\expandafter\ifx\csname urlprefix\endcsname\relax\def\urlprefix{URL }\fi
\providecommand{\eprint}[2][]{\url{#2}}

\bibitem[{Berman and Plemmons(1994)}]{BermPlem1994}
\textsc{Berman, A.} and \textsc{Plemmons, R.~J.} (1994).
\newblock \textit{Nonnegative Matrices in the Mathematical Sciences}, vol.~9 of
  \textit{Classics in Applied Mathematics}.
\newblock Society for Industrial and Applied Mathematics (SIAM), Philadelphia,
  PA.
\newblock Revised reprint of the 1979 original,
  \urlprefix\url{https://doi.org/10.1137/1.9781611971262}.

\bibitem[{Bramson(1998)}]{Bram1998}
\textsc{Bramson, M.} (1998).
\newblock Stability of two families of queueing networks and a discussion of
  fluid limits.
\newblock \textit{Queueing Systems Theory Appl.}, \textbf{28} 7--31.
\newblock \urlprefix\url{https://doi.org/10.1023/A:1019182619288}.

\bibitem[{Bramson and Dai(2001)}]{BramDai2001}
\textsc{Bramson, M.} and \textsc{Dai, J.~G.} (2001).
\newblock Heavy traffic limits for some queueing networks.
\newblock \textit{Ann. Appl. Probab.}, \textbf{11} 49--90.
\newblock \urlprefix\url{http://dx.doi.org/10.1214/aoap/998926987}.

\bibitem[{Braverman et~al.(2017)Braverman, Dai and Miyazawa}]{BravDaiMiya2017}
\textsc{Braverman, A.}, \textsc{Dai, J.} and \textsc{Miyazawa, M.} (2017).
\newblock Heavy traffic approximation for the stationary distribution of a
  generalized {Jackson} network: the {BAR} approach.
\newblock \textit{Stochastic Systems}, \textbf{7} 143--196.
\newblock \urlprefix\url{http://projecteuclid.org/euclid.ssy/1495785619}.

\bibitem[{Braverman et~al.(2025)Braverman, Dai and Miyazawa}]{BravDaiMiya2025}
\textsc{Braverman, A.}, \textsc{Dai, J.} and \textsc{Miyazawa, M.} (2025).
\newblock The {BAR}-approach for multiclass queueing networks with {SBP}
  service policies.
\newblock \textit{Stochastic Systems}, \textbf{15} 1--49.
\newblock \urlprefix\url{https://doi.org/10.1287/stsy.2023.0011}.

\bibitem[{Budhiraja and Lee(2009)}]{BudhLee2009}
\textsc{Budhiraja, A.} and \textsc{Lee, C.} (2009).
\newblock Stationary distribution convergence for generalized {J}ackson
  networks in heavy traffic.
\newblock \textit{Math. Oper. Res.}, \textbf{34} 45--56.
\newblock \urlprefix\url{http://dx.doi.org/10.1287/moor.1080.0353}.

\bibitem[{Chen and Zhang(2000)}]{ChenZhan2000b}
\textsc{Chen, H.} and \textsc{Zhang, H.} (2000).
\newblock A sufficient condition and a necessary condition for the diffusion
  approximations of multiclass queueing networks under priority service
  disciplines.
\newblock \textit{Queueing Systems. Theory and Applications}, \textbf{34}
  237--268.

\bibitem[{Dai and Dieker(2011)}]{DaiDiek2011}
\textsc{Dai, J.~G.} and \textsc{Dieker, A.} (2011).
\newblock Nonnegativity of solutions to the basic adjoint relationship for some
  diffusion processes.
\newblock \textit{Queueing Syst.}, \textbf{68} 295--303.
\newblock \urlprefix\url{http://dx.doi.org/10.1007/s11134-011-9236-z}.

\bibitem[{Dai and Kurtz(1994)}]{DaiKurt1994}
\textsc{Dai, J.~G.} and \textsc{Kurtz, T.~G.} (1994).
\newblock Characterization of the stationary distribution for a semimartingale
  reflecting {Brownian} motion in a convex polyhedron.
\newblock Preprint.

\bibitem[{Dai et~al.(2014)Dai, Miyazawa and Wu}]{DaiMiyaWu2014}
\textsc{Dai, J.~G.}, \textsc{Miyazawa, M.} and \textsc{Wu, J.} (2014).
\newblock A multi-dimensional {SRBM}: geometric views of its product form
  stationary distribution.
\newblock \textit{Queueing Systems}, \textbf{78} 313--335.
\newblock (arXiv version can be downloaded at the website),
  \urlprefix\url{http://arxiv.org/abs/1312.1758}.

\bibitem[{Dai et~al.(1997)Dai, Yeh and Zhou}]{DaiYehZhou1997}
\textsc{Dai, J.~G.}, \textsc{Yeh, D.~H.} and \textsc{Zhou, C.} (1997).
\newblock The {QNET} method for re-entrant queueing networks with priority
  disciplines.
\newblock \textit{Operations Research}, \textbf{45} 610--623.

\bibitem[{Gamarnik and Zeevi(2006)}]{GamaZeev2006}
\textsc{Gamarnik, D.} and \textsc{Zeevi, A.} (2006).
\newblock Validity of heavy traffic steady-state approximation in generalized
  {J}ackson networks.
\newblock \textit{Ann. Appl. Probab.}, \textbf{16} 56--90.
\newblock \urlprefix\url{http://dx.doi.org/10.1214/105051605000000638}.

\bibitem[{Harrison(1988)}]{Harr1988}
\textsc{Harrison, J.~M.} (1988).
\newblock Brownian models of queueing networks with heterogeneous customer
  populations.
\newblock In \textit{Stochastic Differential Systems, Stochastic Control Theory
  and Applications} (W.~Fleming and P.~L. Lions, eds.), vol.~10 of \textit{The
  IMA Volumes in Mathematics and Its Applications}. Springer, New York, NY,
  147--186.
\newblock \urlprefix\url{https://doi.org/10.1007/978-1-4613-8762-6_11}.

\bibitem[{Harrison and Nguyen(1993)}]{HarrNguy1993}
\textsc{Harrison, J.~M.} and \textsc{Nguyen, V.} (1993).
\newblock Brownian models of multiclass queueing networks: current status and
  open problems.
\newblock \textit{Queueing Systems Theory Appl.}, \textbf{13} 5--40.
\newblock \urlprefix\url{https://doi.org/10.1007/BF01158927}.

\bibitem[{Harrison and Reiman(1981)}]{HarrReim1981}
\textsc{Harrison, J.~M.} and \textsc{Reiman, M.~I.} (1981).
\newblock Reflected {B}rownian motion on an orthant.
\newblock \textit{Ann. Probab.}, \textbf{9} 302--308.
\newblock \urlprefix\url{https://doi.org/10.1214/aop/1176994471}.

\bibitem[{Harrison and Williams(1987)}]{HarrWill1987}
\textsc{Harrison, J.~M.} and \textsc{Williams, R.~J.} (1987).
\newblock Brownian models of open queueing networks with homogeneous customer
  populations.
\newblock \textit{Stochastics}, \textbf{22} 77--115.
\newblock \urlprefix\url{http://dx.doi.org/10.1080/17442508708833469}.

\bibitem[{Kallenberg(2001)}]{Kall2001}
\textsc{Kallenberg, O.} (2001).
\newblock \textit{Foundations of Modern Probability}.
\newblock 2nd ed. Springer Series in Statistics, Probability and its
  applications, Springer, {New York}.

\bibitem[{Taylor and Williams(1993)}]{TaylWill1993}
\textsc{Taylor, L.~M.} and \textsc{Williams, R.~J.} (1993).
\newblock Existence and uniqueness of {Semimartingale} reflecting {Brownian}
  motions in an orthant.
\newblock \textit{Probability Theory and Related Fields}, \textbf{96} 283--317.

\bibitem[{Willams(1998)}]{Will1998}
\textsc{Willams, R.~J.} (1998).
\newblock Some recent developments for queueing networks.
\newblock In \textit{Probability Towards 2000} (L.~Accardi and C.~C. Heyde,
  eds.). Springer, 340--456.

\end{thebibliography}

\def\cprime{$'$} \def\cprime{$'$} \def\cprime{$'$} \def\cprime{$'$}
  \def\cprime{$'$} \def\cprime{$'$} \def\cprime{$'$}

\end{document}